\def\draft{n}
\newtheorem{theorem}{Theorem}[section]
\newtheorem{proposition}{Proposition}[section]
\theoremstyle{definition}
\newtheorem{lemma}[proposition]{Lemma}
\newtheorem{definition}[proposition]{Definition}
\newtheorem{remark}[proposition]{Remark}
\newtheorem{corollary}[proposition]{Corollary}
\def\printname#1{
        \if\draft y
                \smash{\makebox[0pt]{\hspace{-0.5in}
                        \raisebox{8pt}{\tt\tiny #1}}}
        \fi
}
\newcommand{\psdraw}[2]
         {\begin{array}{c} \hspace{-1.3mm}
        \raisebox{-4pt}{\epsfig{figure=draws/#1.eps,width=#2}}
        \hspace{-1.9mm}\end{array}}
\newlength{\standardunitlength}
\long\def\@makecaption#1#2{%
     \vskip 10pt

\setbox\@tempboxa\hbox{
       \small\sf{\bfcaptionfont #1. }\ignorespaces #2}%
     \ifdim \wd\@tempboxa >\captionwidth {%
         \rightskip=\@captionmargin\leftskip=\@captionmargin
         \unhbox\@tempboxa\par}%
       \else
         \hbox to\hsize{\hfil\box\@tempboxa\hfil}%
     \fi}
\font\bfcaptionfont=cmssbx10 scaled \magstephalf
\newdimen\@captionmargin\@captionmargin=2\parindent
\newdimen\captionwidth\captionwidth=\hsize
\def\lbl#1{\label{#1}\printname{#1}}
\def\BN{\mathbb N}
\def\BZ{\mathbb Z}
\def\BQ{\mathbb Q}
\def\BR{\mathbb R}
\def\BC{\mathbb C}
\def\calT{\mathcal T}
\def\a{\alpha}
\def\l{\lambda}
\def\ga{\gamma}
\def\la{\langle}
\def\ra{\rangle}
\def\e{\epsilon}
\def\d{\delta}
\def\longto{\longrightarrow}
\def\SL{\mathrm{SL}}
\def\pt{\partial}
\def\calN{\mathcal{N}}
\def\calA{\mathcal{A}}
\def\d{\delta}
\def\lt{\mathrm{lt}}
\def\calD{\mathcal{D}}
\def\ch{\mathrm{ch}}
\begin{document}


\title[The degree of a $q$-holonomic sequence is a quadratic quasi-polynomial]{
The degree of a $q$-holonomic sequence is a quadratic quasi-polynomial}
\author{Stavros Garoufalidis}
\address{School of Mathematics \\
         Georgia Institute of Technology \\
         Atlanta, GA 30332-0160, USA \newline 
         {\tt \url{http://www.math.gatech.edu/~stavros}}}
\email{stavros@math.gatech.edu}
\thanks{The author was supported in part by NSF. \\
\newline
1991 {\em Mathematics Classification.} Primary 57N10. Secondary 57M25.
\newline
{\em Key words and phrases: $q$-holonomic sequence, quasi-polynomials,
exponential sums, Newton polygon, regular-singular equation, formal
solutions, WKB, Skolem-Mahler-Lech theorem, $q$-Weyl algebra, $q$-difference
equations.
}
}

\date{March 2, 2011}


\dedicatory{To Doron Zeilberger, on the occasion of his 60th birthday}

\begin{abstract}
A sequence of rational functions in a variable $q$ is $q$-holonomic if it
satisfies a linear recursion with coefficients polynomials in $q$ and $q^n$.
We prove that the degree of a $q$-holonomic sequence is eventually 
a quadratic quasi-polynomial, and that the leading term satisfies a linear
recursion relation with constant coefficients.
 Our proof uses differential Galois theory 
(adapting proofs regarding holonomic $D$-modules to the case of $q$-holonomic 
$D$-modules) combined with the Lech-Mahler-Skolem theorem from number theory.
En route, we use the Newton polygon
of a linear $q$-difference equation, and introduce the notion of 
regular-singular $q$-difference equation and a WKB basis of solutions
of a linear $q$-difference equation at $q=0$. We then use the 
Skolem-Mahler-Lech theorem to study the vanishing of their leading term. 
Unlike the case of $q=1$, there are no analytic problems regarding 
convergence of the WKB solutions. Our proofs are constructive, and they
are illustrated by an explicit example.
\end{abstract}

\maketitle

\tableofcontents

\section{Introduction}
\lbl{sec.intro}

\subsection{History}
\lbl{sub.history}

$q$-holonomic sequences appear in abundance in Enumerative Combinatorics;
\cite{PWZ,St}. Here and and below, $q$ is a variable, and not a complex
number. The fundamental theorem of Wilf-Zeilberger states that 
a multi-dimensional finite sum of a (proper) $q$-hyper-geometric term
is always $q$-holonomic; see \cite{WZ,Z,PWZ}. Given this result,
one can easily generate $q$-holonomic sequences. We learnt about this
astonishing result from Doron Zeilberger in 2002. Putting this
together with the fact that many {\em state-sum} invariants in Quantum 
Topology are multi-dimensional sums of the above shape, it follows
that Quantum Topology provides us with a plethora of $q$-holonomic
sequences of {\em natural origin}; \cite{GL}. For example, the sequence
of {\em Jones polynomials} of a knot and its parallels (technically,
the colored Jones function) is $q$-holonomic. Moreover, the corresponding
minimal recursion relation can be chosen canonically and is
conjecturally related to geometric invariants of the knot; see \cite{Ga1}.
Recently, the author focused on the degree of a $q$-holonomic sequence,
and in the case of the Jones polynomial it is also conjecturally related to 
topological invariants of knot complement; see \cite{Ga2}. Since 
little is known about the Jones polynomial of a knot and its parallels,
one might expect no regularity on its sequence of degree. The contrary
is true, and in fact is a property of $q$-holonomic sequences and 
the focus of this paper. Our results were announced in \cite{Ga2} and 
\cite{Ga3}, where numerous examples of geometric/topological origin were 
discussed.

\subsection{The degree and the leading term of a $q$-holonomic sequence}
\lbl{sub.cj}

Our main  theorem concerns the degree and the leading term of a $q$-holonomic
sequence. To phrase it, we need to recall what is a $q$-holonomic
sequence, and what is a quasi-polynomial.
A sequence $(f_n(q))$ of rational functions is $q$-{\em holonomic} if it
satisfies a recursion relation of the form
\begin{equation}
\lbl{eq.rec}
a_d(q^n,q) f_{n+d}(q) + \dots + a_0(q^n,q) f_{n}(q)=0
\end{equation}
for all $n$ where $a_j(u,v) \in \BQ[u,v]$ for $j=0,\dots,d$
and $a_d(u,v) \neq 0$. 

Given a polynomial with rational coefficients
$$
f(q)=\sum_m^M c_i q^i \in \BQ[q]
$$
that satisfies $c_m c_M \neq 0$, its {\em degree} (also known as the
{\em order} in different contexts) $\d(f(q))$ and its 
{\em leading term} $\lt(f(q))$ is given by $m$ and $c_m$ respectively.
In other words, the degree and the leading term of $f(q)$ is the order and
the starting coefficient in the Taylor series expansion of $f(q)$ at $q=0$.
The degree and leading term  
can be uniquely extended to the field $\BQ(q)$ of all rational functions, 
the ring $\BQ[[q]]$ of formal power series in $q$, the field $\BQ((q))$
of all Laurent series in $q$ and finally to the 
{\em algebraically closed field} 
\begin{equation}
\lbl{eq.K}
K=\overline{\BQ}\{\{q\}\}=\cup_{r=1}^\infty \overline{\BQ}((q^{1/r}))
\end{equation} 
of all {\em Puiseux series} in $q$ with algebraic coefficients 
(see \cite{Wa}). The field $K$ is required in Theorems \ref{thm.2} 
and \ref{thm.3} below.

Recall that a {\em quasi-polynomial} $p(n)$ is a 
function 
$$
p: \BN \longto \BN, \qquad p(n)=\sum_{j=0}^d c_j(n) n^j
$$ 
for some $d \in \BN$ where $c_j(n)$ is a periodic fuction with integral 
period for $j=1,\dots,d$; \cite{St,BR}.
If $c_j=0$ for $j>2$, then we will say that $p$ is a 
{\em quadratic} quasi-polynomial. We will say that a function is eventually
equal to a quasi-polynomial if they agree for all but finitely many
values.

\begin{theorem}
\lbl{thm.1}
\rm{(a)}
The degree of a $q$-holonomic sequence is eventually a quadratic 
quasi-polynomial.
\newline
\rm{(b)}
The leading term of a $q$-holonomic sequence eventually 
satisfies a linear recursion with {\em constant} coefficients.
\end{theorem}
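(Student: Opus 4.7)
The plan is to recast the recursion \eqref{eq.rec} as a linear $q$-difference equation $Lf=0$, where $L = \sum_{j=0}^{d} a_j(Y,q) X^j$ sits inside the $q$-Weyl algebra generated by operators $X, Y$ with $XY = qYX$, acting on sequences via $(Xf)_n = f_{n+1}$ and $(Yf)_n = q^n f_n$.  To $L$ one associates its Newton polygon in the $(j,v)$-plane, where $v$ records the $q$-adic valuation of each monomial; let $s_1 < \cdots < s_k$ be its slopes, with horizontal lengths $d_1, \dots, d_k$ summing to $d$.  These slopes control the quadratic-in-$n$ growth rates of the $q$-adic valuations of formal solutions.

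The core technical step is to construct a basis of $d$ formal WKB solutions of $L$ over the Puiseux field $K$.  Adapting the classical slope-by-slope reduction for holonomic $D$-modules to the $q$-difference setting, I would first reduce $L$ on each slope $s_i$ to a regular-singular form, and then diagonalize the resulting indicial operator to extract characteristic exponents $\mu_{i,1}, \dots, \mu_{i,d_i} \in \overline{\BQ}$.  This produces solutions
$$
f^{(i,\ell)}_n(q) \;=\; q^{s_i \binom{n}{2} + \mu_{i,\ell} n}\, P_{i,\ell}(n, q),
$$
with $P_{i,\ell}(n,q) \in K[[q]]$ and $P_{i,\ell}(n,0) \neq 0$.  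Because the construction is carried out $q$-adically rather than analytically near $q=1$, no convergence issues arise and the series are honest formal objects.  Any $q$-holonomic solution of \eqref{eq.rec} then admits a unique expansion $f_n(q) = \sum_{i,\ell} c_{i,\ell} f^{(i,\ell)}_n(q)$ with scalars $c_{i,\ell} \in K$, and the leading coefficient $P_{i,\ell}(n,0)$ is an exponential polynomial in $n$ of the form $\sum_\nu \gamma_{i,\ell,\nu} \rho_{i,\ell,\nu}^n$ with algebraic $\gamma$'s and $\rho$'s, since it satisfies a constant-coefficient recursion in $n$ coming from the principal part of $L$ at $q=0$.

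Given this basis, the $q$-adic valuation of each summand $c_{i,\ell} f^{(i,\ell)}_n$ is the quadratic $s_i \binom{n}{2} + \mu_{i,\ell} n + \d(c_{i,\ell} P_{i,\ell}(n,0))$ in $n$, and the degree of $f_n$ is the minimum of these quadratics \emph{provided} no cancellation occurs among the dominant summands.  The combined leading coefficient of any such collision is itself an exponential sum $\sum_\nu \gamma_\nu \rho_\nu^n$ in $n$ with coefficients in $K$.  By the Skolem-Mahler-Lech theorem (valid over characteristic-zero fields), the zero set of any such sum is a finite union of arithmetic progressions together with a finite exceptional set.  Partitioning $\BN$ into residue classes modulo a suitable $N$ and iterating until the process stabilizes, one finds that on each residue class a single WKB summand eventually dominates and its quadratic exponent equals the degree of $f_n$ --- precisely the assertion that the degree is an eventual quadratic quasi-polynomial, proving (a).  Part (b) follows at once: on each residue class the leading term is then a non-vanishing exponential sum $\sum_\nu \gamma_\nu \rho_\nu^n$ in $n$, and these exponential sums jointly satisfy a single linear recursion with constant coefficients obtained from the companion matrix of the $\rho_\nu$'s.

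I expect the main obstacle to lie in the construction and normalization of the WKB basis --- in particular, carrying out the slope-by-slope reduction to regular-singular form in the $q$-difference setting and verifying that $P_{i,\ell}(n,0)$ has the claimed exponential-polynomial shape in $n$.  Once this machinery is in place, the degree analysis is a finite combinatorial procedure, and the Skolem-Mahler-Lech theorem is exactly the input needed to control the accidental cancellations that prevent the naive minimum of quadratics from being the true degree.
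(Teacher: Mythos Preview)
Your proposal is correct and follows essentially the same route as the paper: construct a formal WKB basis via the Newton polygon and slope-by-slope Hensel-type reductions (the paper's Theorem~\ref{thm.2} and Theorem~\ref{thm.3}), expand $f_n$ in this basis, and then invoke Skolem--Mahler--Lech on the resulting generalized power sums to control cancellations among the dominant terms (the paper's Theorem~\ref{thm.4}). One refinement to keep in mind when you carry this out: the indicial operator at $q=0$ need not be diagonalizable in the resonant case, so the WKB series must be allowed to carry polynomial-in-$n$ prefactors (as in the paper's Definition~\ref{def.WKB}), and accordingly the leading terms are generalized power sums $\sum_\nu p_\nu(n)\rho_\nu^n$ with polynomial $p_\nu$, not pure exponential sums --- but SML applies to these just as well.
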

Theorem \ref{thm.1} follows from Theorems \ref{thm.2} and
\ref{thm.4} below.

\begin{remark}
\lbl{rem.othercoeff}
If $f_n(q) \in \BZ[q^{\pm 1}]$ is $q$-holonomic with degree $\d_n$ and leading
term $\lt_n$, it follows that $f_n(q)-\lt_n q^{\d_n}$ is also $q$-holonomic.
Thus, Theorem \ref{thm.1} applies to each one of the terms of $f_n(q)$. 
\end{remark}

\begin{remark}
\lbl{rem.divisio}
L. Di Vizio brought to our attention that results similar to part (a) of 
Theorem \ref{thm.1} appear in \cite[Thm.4.1]{BB} and also in 
\cite[Sec.1.1,Sec.1.3]{DV}. However, the statement and proof of Theorem
\ref{thm.1} appear to be new. 
\end{remark}

Our next corollary to Theorem \ref{thm.1} characterizes which sequences
of monomials are $q$-holonomic sequences. In a sense, such sequences
are building blocks of all $q$-holonomic sequences.

\begin{corollary}
\lbl{cor.thm1}
If $(a_n)$ and $(b_n)$ are sequences of integers (with $a_n \neq 0$
for all $n$), then $(a_n q^{b_n})$ is $q$-holonomic
if and only if $b_n$ is holonomic and $a_n$ is a quadratic quasi-polynomial
for all but finitely many $n$.
\end{corollary}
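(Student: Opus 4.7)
The plan is to establish the biconditional, using Theorem~\ref{thm.1} in the forward direction and closure properties of $q$-holonomic sequences in the backward direction. Since $a_n\neq 0$, the monomial $a_n q^{b_n}$ has $\d(a_n q^{b_n})=b_n$ and $\lt(a_n q^{b_n})=a_n$ in the sense of \S\ref{sub.cj}.

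For the \emph{forward} direction, suppose $(a_n q^{b_n})$ is $q$-holonomic. Theorem~\ref{thm.1}(a) says the degree $b_n$ is eventually a quadratic quasi-polynomial; since every quasi-polynomial trivially satisfies a constant-coefficient linear recurrence, $b_n$ is holonomic, yielding the first half of the right-hand side. Theorem~\ref{thm.1}(b) says $a_n$ eventually satisfies a linear recurrence with constant coefficients. I would then promote this C-finite conclusion to ``$a_n$ is a quadratic quasi-polynomial'' by exploiting the one-dimensional WKB solution space of the module generated by the single monomial $a_n q^{b_n}$: because no cancellation between distinct Puiseux branches can occur, the Skolem--Mahler--Lech input that drives Theorem~\ref{thm.1}(b) forces the characteristic roots of the $a_n$-recurrence to be roots of unity, so $a_n$ is quasi-polynomial; the quadratic degree bound is then inherited from the quadratic growth of $b_n$ via the shape of the WKB exponent.

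For the \emph{backward} direction, suppose $b_n$ is holonomic and $a_n$ is eventually a quadratic quasi-polynomial. Then $a_n$ satisfies an explicit constant-coefficient recurrence, which is trivially a $q$-holonomic relation on $a_n\in\BQ\subset\BQ(q)$. Similarly, an integer linear recurrence on the exponents $b_n$ translates into a multiplicative relation among the $q^{b_{n+j}}$'s which, after polynomial manipulation and clearing denominators, yields a $q$-holonomic recurrence for $(q^{b_n})$. The sequence $(a_n q^{b_n})$ is then the Hadamard product of two $q$-holonomic sequences, hence itself $q$-holonomic by the standard closure of $q$-holonomic sequences under Hadamard product in the $q$-Weyl algebra.

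The main obstacle is the sharpening from C-finite to quadratic quasi-polynomial in the forward direction. Theorem~\ref{thm.1}(b) by itself only delivers a constant-coefficient recurrence for $a_n$, which admits many integer solutions (e.g.\ exponentially growing ones) that are not quasi-polynomials. The extra rigidity must come from the fact that $a_n q^{b_n}$ is a single monomial and not a genuine sum of monomials, so the $K$-basis of WKB solutions of the ambient $q$-difference module collapses to one element; turning this rigidity into the precise statement that all characteristic roots are roots of unity is the delicate step, and it is where the arithmetic Skolem--Mahler--Lech input combines nontrivially with the Newton-polygon/WKB analysis developed for Theorem~\ref{thm.1}.
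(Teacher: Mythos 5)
The paper states this corollary without a written proof, so the only thing to assess is whether your argument works; it does not, and the two places where it breaks are instructive. The ``promotion'' step in your forward direction --- upgrading the C-finiteness of $a_n$ delivered by Theorem \ref{thm.1}(b) to ``$a_n$ is a quadratic quasi-polynomial'' --- is not a delicate point to be finessed with WKB rigidity; it is false. Take $a_n=2^n$ and $b_n=0$: then $a_nq^{b_n}=2^n$ satisfies $f_{n+1}-2f_n=0$, hence is $q$-holonomic, $b_n$ is holonomic, yet $a_n$ is not a quasi-polynomial of any degree. No amount of Skolem--Mahler--Lech input can force the characteristic roots of the recurrence for the leading term to be roots of unity, because they need not be. Your backward direction fails symmetrically: a linear recurrence for the exponents $b_n$ becomes a \emph{multiplicative}, hence nonlinear, relation among the $q^{b_{n+j}}$, not a relation of the form \eqref{eq.rec}, and no ``polynomial manipulation'' repairs this. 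Indeed $b_n=2^n$ (or even $b_n=n^3$) is holonomic, while $q^{b_n}$ cannot be $q$-holonomic, since by Theorem \ref{thm.1}(a) its degree $b_n$ would then have to be eventually a quadratic quasi-polynomial. So the hypothesis ``$b_n$ holonomic'' is far too weak for the ``if'' direction.

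Both failures have the same diagnosis, which you should have drawn from your own counterexamples: as printed, the corollary attaches the two conclusions of Theorem \ref{thm.1} to the wrong sequences. Since $a_n\neq 0$, the degree of the monomial $a_nq^{b_n}$ is $b_n$ and its leading term is $a_n$, so the statement that actually follows is: $(a_nq^{b_n})$ is $q$-holonomic if and only if $b_n$ is eventually a quadratic quasi-polynomial and $a_n$ eventually satisfies a linear recursion with constant coefficients. With that reading the forward direction is immediate from Theorem \ref{thm.1}. For the backward direction, on each residue class modulo the period $N$ of the quasi-polynomial one has $b_{n+N}-b_n=\a n+\b$ linear in $n$, so $q^{b_{n+N}}=(q^n)^{\a}q^{\b}\,q^{b_n}$ is a recursion of the form \eqref{eq.rec} (after interleaving the residue classes, and replacing $q$ by $q^{1/r}$ if needed to clear denominators in $\a,\b$); the generalized power sum $a_n$ contributes a constant-coefficient recursion; and the closure of $q$-holonomic sequences under Hadamard product, which you invoke correctly, finishes the argument. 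Your identification of degree and leading term of a monomial and your use of Hadamard closure are the right ingredients; the error is in taking the printed statement literally and then trying to bridge a gap that cannot be bridged.
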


\subsection{The Newton polygon of a linear $q$-difference equation}
\lbl{sub.newton}

As is common, we can write a linear $q$-difference equation \eqref{eq.rec}
in operator form by introducing two operators $L$ and $M$ which
act on a sequence $(f_n(q))$ by
$$
(Lf)_n(q)=f_{n+1}(q), \qquad (Mf)_n(q)=q^n f_n(q).
$$
It is easy to see that the operators $M$ and $L$ $q$-commute
\begin{equation}
\lbl{eq.qML}
LM=q ML
\end{equation}
and generate the so-called $q$-{\em Weyl algebra} 
\begin{equation}
\lbl{eq.qweyl}
\calD=\cup_{r=1}^\infty K((M^{1/r}))\la L\ra/(LM^{1/r}-q^{1/r}ML).
\end{equation}
We will call an element of $\calD$ a {\em linear $q$-difference operator}. 
The general element of $\calD$ is of the form
\begin{equation}
\lbl{eq.recP}
P=\sum_{i=0}^d a_i(M,q) L^i
\end{equation}
where $a_i(M,q) \in K((M^{1/r}))$ for $i=0,\dots,d$ and
$a_d(M,q) \neq 0$. The equation 
$$
Pf =0
$$ 
for a $K$-valued sequence $f_n(q)$ is exactly the recursion relation 
\eqref{eq.rec}. The {\em Newton polygon} $N(P)$ of an element $P \in \calD$
is defined to be the {\em lower convex hull} of the points $(i,\d_M(a_i))$
where $\d_M$ denotes the smallest degree with respect to $M$. 
Note that usually the Newton polygon $N'(P)$ of a 2-variable polynomial is
defined to be the convex hull of the exponents of its monomials. Since
we are working locally at $q=0$, we view $N'(P)$ by placing our eye at
$-\infty$ in the vertical axis and looking up. The resulting object
is the lower convex hull $N(P)$ defined above. The Newton polygon of
a linear $q$-difference equation was also studied in the recent Ph.D.
thesis by P. Horn; see \cite[Chpt.2]{Horn}.

The Newton polygon $N(P)$ of $P$ is a finite union of intervals 
with rational end points and two vertical rays. Each interval with end-points
$(i,d_i)$ and $(j,d_j)$ for $i<j$ has a {\em slope}
$s=(d_j-d_i)/(j-i)$ and a {\em length} (or {\em multiplicity}) $l=j-i$. 
The {\em multiset of slopes} $s(N(P))$ of $N(P)$ is the set of slopes
of $s$ $N(P)$ each with multiplicity $l_s$. An example of a Newton polygon 
of a linear $q$-difference operator of degree $7$ with three slopes 
$-1,0,1/2$ of multiplicity $2,1,4$ respectively is shown
here:
$$
\psdraw{3slopes}{2in}
$$ 
The Newton polygon is a convenient way to organize solutions to a linear
$q$-difference equation. The reader may compare this with the
Newton polygon of a linear differential operator attributed to Malgrange
and Ramis; see \cite[Sec.3.3]{vPS}
and references therein. 
In analogy with the theory of linear differential operators, we will
say that $P$ is a {\em regular-singular} $q$-difference operator if
its Newton polygon consists of a single horizontal segment. In other words,
after a minor change of variables, with the notation of \eqref{eq.recP}
this means that $a_i(M,q) \in K[[M^{1/r}]]$ for all $i$ and 
$a_0(0,q) a_d(0,q) \neq 0$.

\subsection{WKB sums}
\lbl{sub.WKB}

Since Equation \eqref{eq.rec} involves two independent variables $q$ and $q^n$,
let us set $q^n=u$ and consider the $q$-difference equation 
\begin{equation}
\lbl{eq.rec2}
a_d(u,q) f(uq^d,q) + \dots + a_0(u,q) f(u,q)=0
\end{equation}
Note that if $f(u,q)$ solves \eqref{eq.rec2}, and if $f_n(q):=f(q^n,q)$
makes sense, then $f_n(q)$ solves \eqref{eq.rec} and vice-versa. 
It turns out that the general solution to 
\eqref{eq.rec2} is a WKB sum. The latter are generalizations of 
the better known generalized power sums discussed in Section \ref{sub.gps}
below.

\begin{definition}
\lbl{def.WKB}
\rm{(a)}
A {\em formal WKB series} is an expression of the form
\begin{equation}
\lbl{eq.WKB}
f_{\tau}(u,q)= q^{\ga n^2} \l(q)^n A(n,u,q) 
\end{equation}
where $\tau=(\ga,\l(q),A)$ and 
\begin{itemize}
\item
the exponent $\ga$ is a rational number,
\item
$$
A(n,u,q)=\sum_{i=0}^M \sum_{k=0}^\infty \phi_{i,k}(q) u^{k/r} n^i
$$ 
is a polynomial in $n$ with coefficients in 
$\overline{\BQ}((q^{1/r}))[[u^{1/r}]]$ for some $r \in \BN$.
\item
$\l(q) \in \overline{\BQ}((q^{1/r}))$ and $\phi_{0,0}(q)=1$.
\item
there exists $c \in \BQ$ so that 
$\d(\phi_{i,k}(q)) \geq ck $ for all $i$ and $k$
\end{itemize}
\rm{(b)}
A {\em formal WKB sum} is a finite $K$-linear combination
of formal WKB series.
\end{definition}

\noindent
Observe that if $f_{\tau}(u,q)$ is a formal WKB series, then its evaluation
\begin{equation}
\lbl{eq.evalf}
f_{\tau,n}(q)=f_{\tau}(q^n,q) \in K
\end{equation}
is a well-defined $K$-valued sequence for $n > -r c$.

\noindent
Observe further if $f_{\tau}(u,q)$ is given by \eqref{eq.WKB}, 
the operators $L$ and $M$ act on $f_{\tau}(u,q)$ 
by:
\begin{equation}
\lbl{eq.LMf}
(Lf_{\tau})(u,q)=q^{\ga (n+1)^2}\l(q)^{n+1} A(n+1,uq,q)
, \qquad (Mf_{\tau})(u,q)= q^{\ga n^2}\l(q)^n u A(n,u,q).
\end{equation}

\begin{theorem}
\lbl{thm.2}
Every linear $q$-difference equation has a basis of solutions of the
form $f_{\tau,n}(q)$. Moreover, the multiset of exponents of the bases is
the multiset of the negatives of the slopes of the Newton polygon.
\end{theorem}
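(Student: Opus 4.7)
The strategy is to reduce to the case of a regular-singular $q$-difference equation via a WKB substitution, treating each slope of the Newton polygon separately. The first step is to compute how the ansatz $f(u,q) = q^{\gamma n^2}\lambda(q)^n g(u,q)$ (with $u = q^n$) transforms $Pf=0$. Using equation \eqref{eq.LMf}, one finds that $g$ satisfies a new $q$-difference equation $Qg = 0$, where $Q$ is obtained from $P$ by the formal substitution $L \mapsto q^{\gamma}\lambda(q)\, u^{2\gamma}L$. The effect on the Newton polygon is to shift the point $(i, \delta_M(a_i))$ vertically by $2\gamma i$; in other words, $N(Q)$ is $N(P)$ tilted by an affine amount controlled by $\gamma$, so that a well-chosen $\gamma$ horizontalizes any prescribed slope of $N(P)$.

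The second ingredient is a slope decomposition for $q$-difference operators: if $N(P)$ has distinct slopes $s_1 < \dots < s_k$ with multiplicities $l_1, \dots, l_k$, then after passing to a suitable ramified extension $\overline{\BQ}((q^{1/r}))$, one factors $P = P_1 P_2 \cdots P_k$ (possibly times a unit) where each $N(P_j)$ consists of a single segment of slope $s_j$ and length $l_j$. This is a $q$-analog of the Malgrange--Ramis slope decomposition of linear ODEs (cf.\ \cite[Sec.~3.3]{vPS}) and is proved by a Hensel-type lifting inside the $q$-Weyl algebra $\calD$: one first factors the ``principal symbol'' of $P$ associated to each slope, then lifts the factorization order by order in $u^{1/r}$, using the separation of slopes to invert the relevant linear operator at each step. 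A similar analysis appears in \cite[Chpt.~2]{Horn}.

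For each single-slope factor $P_j$, one applies the WKB substitution with the $\gamma$ that horizontalizes $s_j$; the resulting operator $Q_j$ is regular-singular in the sense of Section \ref{sub.newton}. Its indicial equation---the $u^0$-part of the coefficients of $Q_j$---is a polynomial in $\lambda$ of degree $l_j$, and its $l_j$ roots (in some algebraic extension) give the admissible values of $\lambda(q)$. For each root, the coefficients $\phi_{i,k}(q)$ in $A(n,u,q) = \sum_{i,k} \phi_{i,k}(q)\, u^{k/r} n^i$ are determined recursively from the equation $Q_j g = 0$; the growth bound $\delta(\phi_{i,k}(q)) \geq ck$ is verified by induction on $k$, using uniform estimates on the $q$-valuations of the coefficients of $Q_j$. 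In the resonant case (roots of the indicial polynomial which coincide or differ by integers), the factor $A(n,u,q)$ acquires positive $n$-degree, playing the role that logarithmic solutions play in the Fuchsian ODE setting. Summing over slopes gives $l_1 + \cdots + l_k = d$ solutions, matching the dimension of the solution space of $Pf=0$, and the exponents $\gamma$ thus produced form precisely the multiset dictated by the slopes of $N(P)$.

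The main obstacle I expect is the noncommutative Hensel lemma underlying the slope decomposition, together with the bookkeeping of ramification indices when the slopes are non-integer rationals. A secondary subtlety is the resonant case of the indicial equation, where one must verify that the recursion for the $\phi_{i,k}$ still produces a full basis of solutions once the polynomial-in-$n$ correction $A(n,u,q)$ is allowed. Both issues have well-understood counterparts in the theory of formal solutions to linear ODEs, so the plan is to adapt those arguments to the $q$-Weyl algebra $\calD$; compared with the differential case, the absence of analytic convergence issues actually simplifies the bookkeeping.
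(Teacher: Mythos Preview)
Your proposal is correct and follows essentially the same architecture as the paper: a Hensel-type slope decomposition in the $q$-Weyl algebra (the paper's Proposition~\ref{prop.hensel1}), followed by a WKB substitution to horizontalize each slope and reduce to the regular-singular case.

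The one genuine difference is how the regular-singular case is finished off. You propose to solve it directly from the indicial equation plus a recursion for the $\phi_{i,k}$, handling resonance by allowing $A(n,u,q)$ to be polynomial in $n$. The paper instead performs a \emph{second} Hensel lifting (Proposition~\ref{prop.hensel2}) that factors a regular-singular operator according to the coprime factors of its slim part $S(P)$, reducing to the case where all eigenvalues have the same constant term; it then writes $P=\prod_i(L-a_i(M,q))$ with $a_i(0,q)=1$ and builds the WKB basis by induction on the order, solving one first-order equation at each step (Section~\ref{sub.thm2.proof}). The paper does include your direct indicial-equation approach (Section~\ref{sub.regularsingular}), but only as an alternative proof in the non-resonant case. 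The advantage of the paper's route is that the second Hensel step cleanly separates eigenvalues before any resonance bookkeeping begins, so the inductive construction never has to mix different eigenvalue classes; your route is more in the Frobenius spirit and would also work, but the resonance analysis you flag as a ``secondary subtlety'' is exactly what the paper's extra factorization is designed to sidestep.
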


Recall that $K\{\{M\}\}$ denotes the field of Puiseux series in a variable
$M$ with coefficients in $K$. 
Let $\d_M(f)$ denote the minimum exponent of $M$ in a Puiseux series
$f \in K\{\{M\}\}$.

\begin{theorem}
\lbl{thm.3}
Every monic $q$-difference operator $P \in \calD$ of order $d$ can be factored
as an ordered product
\begin{equation}
\lbl{eq.Pfactor}
P=\prod_{i=1}^d (L-a_i(M,q))
\end{equation}
where $a_i(M,q) \in K\{\{M\}\}$ and the multiset $\{\d_M(a_i)|i=1,\dots,d\}$
of slopes is the negative of the multiset of slopes of the Newton polygon
of $P$.
\end{theorem}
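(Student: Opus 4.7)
The plan is to prove Theorem \ref{thm.3} by induction on the order $d$ of $P$, peeling off one first-order right factor per step. The base case $d=1$ is immediate since a monic first-order operator is already of the required form. For the inductive step, the task is to construct an $a \in K\{\{M\}\}$ so that $P = Q(L-a)$ in $\calD$ for some monic $Q$ of order $d-1$, with $\delta_M(a)$ equal to the negative of a prescribed slope of $N(P)$.

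To construct such an $a$, I would invoke Theorem \ref{thm.2}, which furnishes a WKB basis of solutions to $Pf = 0$. Its proof (not merely its statement) produces, for each slope $s$ of $N(P)$, a \emph{pure} WKB series $f_\tau = q^{\gamma n^2}\lambda(q)^n \phi(u,q)$ whose $n$-polynomial factor has $n$-degree zero and whose $\phi$ satisfies $\phi(0,q) \ne 0$---the ``bottom'' of the Jordan-like tower of solutions attached to slope $s$. For such a pure $f_\tau$, formula \eqref{eq.LMf} gives $(Lf_\tau)/f_\tau = a(M,q)$ explicitly as a Puiseux series in $M$ alone whose valuation $\delta_M(a)$ matches the negative of $s$. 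In particular $(L-a)f_\tau = 0$.

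Next I would right-divide $P$ by $L-a$ in $\calD$. Since $\calD$ is an Ore extension of the field $K\{\{M\}\}$ (by the twisted $q$-shift, cf.\ \eqref{eq.qML}), it is a right Euclidean domain, and one can write $P = Q(L-a) + R$ with $Q$ monic of order $d-1$ and $R \in K\{\{M\}\}$. Applying both sides to $f_\tau$ and using $Pf_\tau = 0 = (L-a)f_\tau$ forces $Rf_\tau = 0$; since $f_\tau$ has nonzero leading $M$-term, this gives $R = 0$. Hence $P = Q(L-a)$, and the inductive hypothesis applied to $Q$ completes the factorization. The slope multiset claim then follows from the Minkowski additivity of Newton polygons under multiplication in $\calD$: $N(P) = N(Q) + N(L-a)$, and $N(L-a)$ consists of a single segment of slope $-\delta_M(a)$, so iterating identifies the multiset of slopes of $N(P)$ with $\{-\delta_M(a_i) : i=1,\dots,d\}$.

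The main technical point---and the step I expect to be the principal obstacle---is the existence of a pure WKB solution (with $n$-polynomial factor of degree zero) at each slope of $N(P)$. This is a structural property of the WKB basis from Theorem \ref{thm.2} rather than of its statement, reflecting the fact that within each slope block the basis forms a tower ordered by $n$-degree whose bottom is the pure series needed above. The Minkowski additivity of Newton polygons under the non-commutative product in $\calD$, used for the slope bookkeeping, is in turn a direct valuation computation using \eqref{eq.qML}.
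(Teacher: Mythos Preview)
Your route is genuinely different from the paper's. In the paper, Theorem~\ref{thm.3} is obtained \emph{before} Theorem~\ref{thm.2}, not after it: the Hensel-type lifting of Propositions~\ref{prop.hensel1} and~\ref{prop.hensel2} factors $P$ first by slope and then by eigenvalue, and Section~\ref{sub.thm2.proof} then \emph{starts} the construction of the WKB basis from an operator already written as $\prod_{i=1}^d(L-a_i(M,q))$. You reverse the flow---build one pure WKB solution, read off a right factor $L-a$ via Ore division, and iterate. Your version has the virtue of making the solution/factor correspondence explicit and of handling the resonant single-eigenvalue block cleanly (a step the paper glosses over when it asserts that Propositions~\ref{prop.hensel1}--\ref{prop.hensel2} already yield \emph{first-order} factors); the paper's version has the virtue of not needing any solution to produce the factorization.

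The one real gap is the circularity you flag but do not close. You invoke ``the proof of Theorem~\ref{thm.2}'' for the existence of a pure WKB solution at a given slope, but in this paper that proof opens by assuming $P=\prod_i(L-a_i(M,q))$, i.e.\ by assuming Theorem~\ref{thm.3}. To make your induction self-contained you must produce the pure solution independently. This is not hard: use Proposition~\ref{prop.hensel1} to pass to a single slope and normalize to the regular-singular case, then use Proposition~\ref{prop.hensel2} to pass to a block with $S(P)=(L-\lambda_0)^m$. In that block $c_{0,k}(q)=\chi(q^k\lambda_0,0,q)=\lambda_0^m(q^k-1)^m\neq 0$ for all $k>0$, so the recursion~\eqref{eq.phik} for the pure ansatz $\lambda_0^n\sum_k\phi_k(q)u^k$ has no obstruction and yields the pure $f_\tau$ you need. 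With that replacement for the appeal to Theorem~\ref{thm.2}, your Ore-division step and the Minkowski additivity $N(P)=N(Q)+N(L-a)$ (which is indeed a straightforward valuation check using \eqref{eq.qML}) complete an honest alternative proof.
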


\begin{remark}
\lbl{rem.fivebranes}
The WKB expansion of solution of linear $q$-difference equations given in
Theorem \ref{thm.2} appears to be new, and perhaps it is related to some
recent work of Witten \cite{Witten}, who proposes a categorification
of the colored Jones polynomial in an arbitrary 3-manifold. The curious
reader may compare \cite[Eqn.6.21]{Witten} with our Theorem \ref{thm.2}.
We wish to thank T. Dimofte for pointing out the reference to us.
\end{remark}

\begin{remark}
\lbl{rem.noq}
Although the proofs of 
Theorems \ref{thm.2} and \ref{thm.3} follow the well-studied case of linear
differential operators in one variable $x$, we are unable to formulate
an analogue of Theorems \ref{thm.2} and \ref{thm.3}
in the differential operator case. Indeed, $q$ is a variable which seems to
be independent of the spacial variables $x$ of a differential operator. 
The meaning
of the variable $q$ can be explained by {\em Quantization}, or from
{\em Tropical Geometry} (where it is usually denoted by $t$) or from
the representation theory of {\em Quantum Groups}; see for example
\cite{Ga3} and referencies therein, and also Section 
\ref{sec.invariants} below. 
\end{remark}

\begin{theorem}
\lbl{thm.4}
If $f_n(q)$ is a finite $K$-linear combination of formal
WKB series of the form \eqref{eq.WKB}, then for large $n$
its degree is given by a quadratic quasi-polynomial and its leading
term satisfies a linear recursion relation with constant coefficients.
\end{theorem}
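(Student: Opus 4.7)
The plan is to expand $f_n(q)$ as a $q$-power series, enumerate its candidate exponents in increasing order, and use the Skolem--Mahler--Lech theorem to control cancellations among the leading coefficients at each level.

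First, for each WKB series $f_{\tau_j,n}$ write $\lambda_j(q)=q^{\mu_j}\tilde\lambda_j(q)$ with $\rho_j:=\tilde\lambda_j(0)\in\overline{\BQ}^*$, expand $\tilde\lambda_j(q)^n$ in powers of $q$, and multiply by the expansion of $A_j(n,q^n,q)$ to obtain
$$
f_{\tau_j,n}(q)\;=\;\rho_j^{\,n}\sum_{M,K\ge 0}C^{(j)}_{M,K}(n)\,q^{\gamma_j n^2+\mu_j n+M/r_j+nK/r_j},
$$
with $C^{(j)}_{M,K}(n)\in\overline{\BQ}[n]$; the normalization $\phi^{(j)}_{0,0}=1$ forces $C^{(j)}_{0,0}$ to have constant term $1$. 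Setting $r=\mathrm{lcm}(r_j)$, every $q$-exponent appearing in $f_n=\sum_j c_j f_{\tau_j,n}$ has the form $An^2+Bn+C$ with $A,B,C\in\BQ$. For $n$ large these exponents are strictly ordered lexicographically by $(\gamma_j,\,\mu_j+K/r_j,\,M/r_j)$, producing a list $E_1(n)<E_2(n)<\cdots$ with $E_1(n)=\gamma^* n^2+\mu^* n$ for $\gamma^*=\min_j\gamma_j$ and $\mu^*=\min\{\mu_j:\gamma_j=\gamma^*\}$. The coefficient of $q^{E_s(n)}$ in $f_n$ is a generalized power sum
$$
L_s(n)\;=\;\sum_{j}c_j\,Q^{(j)}_s(n)\,\rho_j^{\,n},\qquad Q^{(j)}_s\in\overline{\BQ}[n],
$$
and $\delta(f_n)=E_{s(n)}(n)$, $\lt(f_n)=L_{s(n)}(n)$, where $s(n)$ is the smallest $s$ with $L_s(n)\neq 0$.

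The heart of the argument is to control $s(n)$ as a function of $n$. By Skolem--Mahler--Lech, each zero set $Z_s=\{n:L_s(n)=0\}$ is, up to a finite exceptional set, a finite union of arithmetic progressions; and by Lech's refinement the moduli of these progressions are controlled only by ratios of the characteristic roots. Since all characteristic roots of every $L_s$ lie in the fixed finite set $\{\rho_j\}$, one may choose a single modulus $N_0$ that works uniformly for every $s$. Hence each $Z_s$, modulo $N_0$, is a union of residue classes, and the decreasing chain $W_s:=Z_1\cap\cdots\cap Z_s$ of unions of residue classes stabilizes at some set $W^*$ after at most $N_0$ steps. Residues in $W^*$ correspond to $n$ with $f_n\equiv 0$ (degree undefined) and are excluded; on each remaining residue class $b\pmod{N_0}$ the index $s(n)=s_b$ is constant for $n$ sufficiently large, so $\delta(f_n)=E_{s_b}(n)$ is a quadratic polynomial in $n$ on that class. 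Gluing across residues produces a quadratic quasi-polynomial of period dividing $N_0$.

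Finally, on each residue class $b\pmod{N_0}$ the leading coefficient $L_{s_b}(n)$ is a generalized power sum and thus satisfies a linear recursion with constant coefficients. To pass from the per-class statement to a global one, one uses that the indicator sequence of a residue class mod $N_0$ itself satisfies such a recursion (its characteristic roots being $N_0$-th roots of unity), together with the fact that the termwise product of two sequences satisfying linear recursions with constant coefficients again satisfies one. Hence the global leading-term sequence of $f_n$ satisfies a linear recursion with constant coefficients. The main technical hurdle is the uniform bound on the SML modulus across the infinitely many sequences $L_s$; it is resolved precisely because the pool of characteristic roots is the fixed, $s$-independent finite set $\{\rho_j\}$, coming from the finiteness of the linear combination in the hypothesis.
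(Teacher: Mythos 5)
Your proof is correct and follows essentially the same route as the paper: expand the WKB sum in powers of $q$, identify each coefficient as a generalized power sum with characteristic roots drawn from the fixed finite set $\{\rho_j\}$, and apply Skolem--Mahler--Lech to pin down, residue class by residue class, the first surviving exponent and hence the quadratic quasi-polynomial degree and generalized-power-sum leading term. The paper organizes the bookkeeping by extremizing $\gamma_i$ and then $\eta_i$ and terminates the recursion by a terse appeal to linear independence of the WKB series, whereas your stabilizing chain $W_s = Z_1\cap\cdots\cap Z_s$ together with the observation that the SML modulus $N_0$ is uniform across all $L_s$ (and your explicit gluing of per-class leading terms via indicator sequences) makes that termination and gluing step explicit --- a clarification of the same argument rather than a different approach.
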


\subsection{An example}
\lbl{sub.example}

In this section we discuss in detail an example to illustrate the 
introduced notions and the content of Theorems \ref{thm.1}, \ref{thm.2} 
and \ref{thm.4}. The example shows that the proof of Theorem \ref{thm.1} and
the WKB expansion of Theorem \ref{thm.4} is algorithmic, despite the
use of differential Galois theory. Consider the 
$q$-holonomic sequence $f_n(q) \in \BZ[q]$ whose first few terms are given by:

\small{
\begin{eqnarray*}
f_0 &=& 
1 
\\ f_1 &=& 
2 - q^{2} 
\\ f_2 &=& 
3 + q - 2 q^{3} - 2 q^{4} + q^{7} 
\\ f_3 &=& 
4 + 2 q + 2 q^{2} - 3 q^{4} - 
 4 q^{5} - 4 q^{6} - q^{7} + 2 q^{9} + 2 q^{10} + 2 q^{11} - q^{15} 
\\ f_4 &=& 
5 + 3 q + 
 4 q^{2} + 3 q^{3} + q^{4} - 4 q^{5} - 6 q^{6} - 8 q^{7} - 8 q^{8} - 4 q^{9} - 
 2 q^{10} + 3 q^{11} + 4 q^{12} + 7 q^{13} + 5 q^{14} 
\\ & & + 4 q^{15} + q^{16} - 
 2 q^{18} - 2 q^{19} - 2 q^{20} - 2 q^{21} + q^{26} 
\\ f_5 &=& 
6 + 4 q + 6 q^{2} + 6 q^{3} + 
 6 q^{4} + 2 q^{5} - 3 q^{6} - 8 q^{7} - 12 q^{8} - 15 q^{9} - 16 q^{10} - 
 11 q^{11} - 8 q^{12} + 5 q^{14} 
\\ & & + 12 q^{15} + 14 q^{16} + 16 q^{17} + 12 q^{18} + 
 10 q^{19} + 4 q^{20} - q^{21} - 4 q^{22} - 7 q^{23} - 8 q^{24} - 8 q^{25} - 
 5 q^{26} - 4 q^{27} 
\\ & & - q^{28} + 2 q^{30} + 2 q^{31} + 2 q^{32} + 2 q^{33} + 
 2 q^{34} - q^{40} 
\\ f_6 &=& 
7 + 5 q + 8 q^{2} + 9 q^{3} + 11 q^{4} + 9 q^{5} + 7 q^{6} - 
 2 q^{7} - 7 q^{8} - 15 q^{9} - 22 q^{10} - 28 q^{11} - 30 q^{12} - 26 q^{13} 
\\ & & - 
 22 q^{14} - 11 q^{15} - 2 q^{16} + 13 q^{17} + 21 q^{18} + 33 q^{19} + 34 q^{20} + 
 36 q^{21} + 30 q^{22} + 25 q^{23} + 11 q^{24} + 3 q^{25} 
\\ & & - 8 q^{26} - 17 q^{27} - 
 22 q^{28} - 24 q^{29} - 24 q^{30} - 20 q^{31} - 14 q^{32} - 10 q^{33} - q^{34} + 
 2 q^{35} + 7 q^{36} + 8 q^{37} 
\\ & & + 11 q^{38} + 9 q^{39} + 8 q^{40} + 5 q^{41} + 
 4 q^{42} + q^{43} - 2 q^{45} - 2 q^{46} - 2 q^{47} - 2 q^{48} - 2 q^{49} - 
 2 q^{50} + q^{57}
\end{eqnarray*}
}
The general term of the sequence $f$ is given by:
\begin{equation}  
\lbl{eq.fnq} 
f_n(q)=\sum_{k=0}^n \frac{(q)_{n+k}}{(q)_{n-k} (q)_{k}}
\end{equation}
where the $q$-factorial is defined by
$$
(q)_n=\prod_{k=1}^n (1-q^k), \qquad (q)_0=1.
$$
The above expression implies via the Wilf-Zeilberger theorem that $f$
is $q$-holonomic; see \cite{WZ}. The {\tt qzeil.m} implementation 
of the Wilf-Zeilberger proof developed by \cite{PR1,PR2} gives that
$f$ is annihilated by the following operator:

\begin{eqnarray*}
P_f &=& (-1 + M q^2) (-1 + M q + M^2 q^2) L^2 
+
(-2 + M q + M q^2 + 2 M^2 q^2 + M^2 q^3 + 2 M^2 q^4 - 2 M^3 q^4 - 
\\ & & 
    2 M^3 q^5 - M^4 q^5 - M^4 q^6 - M^4 q^7 + M^5 q^7 + M^5 q^8 + 
    M^6 q^9) L 
+ 1 - M q^2 - M^2 q^4. 
\end{eqnarray*}
The 2-dimensional Newton polytope $\calN_2(P_f)$ and the Newton 
polygon $N(P_f)$ are given by:
$$
\psdraw{examplenewton}{2.5in}
$$
$P_f$ is a second order regular-singular $q$-difference operator.
Its Newton polygon $N(P_f)$ has only one slope $0$ with multiplicity $2$.
The edge polynomial of the $0$-slope is cyclotomic $(L-1)^2$ with two
equal roots (i.e., eigenvalues) $1$ and $1$.
The degree $\d_n$ and the leading term $\lt_n$ of $f_n(q)$ are given by 
$\d_n=0$ and $\lt_n=n+1$ for all $n$.

The characteristic curve $\ch_f$ (discussed in Section \ref{sec.invariants})
is reducible given by the zeros $(L,M) \in (\BC^*)^2$ of the polynomial
$$
(-1 + M + M^2) (-1 + 2 L - L^2 + L^2 M - 3 L M^2 + L M^3 + L M^4)=0
$$

The {\tt tropical.lib} program of \cite{Mk} computes 
the vertices of the tropical curve $\calT_f$ are:
   \begin{displaymath}
      (-1,-2),\;\;(3,-2),\;\;(0,-3/2),\;\;(1,-3/2),\;\;(0,-1)
   \end{displaymath}
The next figure is a drawing of the tropical curve $\calT_f$ and its 
multiplicities, where edges not labeled have multiplicity $1$.
  \vspace*{0.5cm}

   \begin{center}

    \begin{texdraw}
       \drawdim cm  \relunitscale 0.4 \arrowheadtype t:V
       \linewd 0.1  \lpatt (1 0)

       \setgray 0.6
       \relunitscale 3
       \move (-2 -1) \fcir f:0 r:0.06
       \move (-2 -1) \lvec (2 -1)
       \htext (0.5 -1.5){$2$}
       \move (-2 -1) \lvec (-1 -0.5)
       \move (-2 -1) \rlvec (-2.5 -0.83333333333333333333333333333333333333333333333333)
       \move (-2 -1) \rlvec (-1 0)
       \move (2 -1) \fcir f:0 r:0.06
       \move (2 -1) \lvec (0 -0.5)
       \move (2 -1) \rlvec (1 0)
       \htext (2.33 -1){$2$}
       \move (2 -1) \rlvec (2.5 -0.625)
       \move (-1 -0.5) \fcir f:0 r:0.06
       \move (-1 -0.5) \lvec (0 -0.5)
       \htext (0 -1){$2$}
       \move (-1 -0.5) \lvec (-1 0)
       \move (0 -0.5) \fcir f:0 r:0.06
       \move (0 -0.5) \lvec (-1 0)
       \move (-1 0) \fcir f:0 r:0.06
       \move (-1 0) \rlvec (-1 0)
       \htext (-1.33 0){$2$}
       \move (-1 0) \rlvec (0 1)
       \htext (-1 0.33){$2$}

        \move (-3 -2) \fcir f:0.8 r:0.03
        \move (-3 -1) \fcir f:0.8 r:0.03
        \move (-3 0) \fcir f:0.8 r:0.03
        \move (-3 1) \fcir f:0.8 r:0.03
        \move (-2 -2) \fcir f:0.8 r:0.03
        \move (-2 -1) \fcir f:0.8 r:0.03
        \move (-2 0) \fcir f:0.8 r:0.03
        \move (-2 1) \fcir f:0.8 r:0.03
        \move (-1 -2) \fcir f:0.8 r:0.03
        \move (-1 -1) \fcir f:0.8 r:0.03
        \move (-1 0) \fcir f:0.8 r:0.03
        \move (-1 1) \fcir f:0.8 r:0.03
        \move (0 -2) \fcir f:0.8 r:0.03
        \move (0 -1) \fcir f:0.8 r:0.03
        \move (0 0) \fcir f:0.8 r:0.03
        \move (0 1) \fcir f:0.8 r:0.03
        \move (1 -2) \fcir f:0.8 r:0.03
        \move (1 -1) \fcir f:0.8 r:0.03
        \move (1 0) \fcir f:0.8 r:0.03
        \move (1 1) \fcir f:0.8 r:0.03
        \move (2 -2) \fcir f:0.8 r:0.03
        \move (2 -1) \fcir f:0.8 r:0.03
        \move (2 0) \fcir f:0.8 r:0.03
        \move (2 1) \fcir f:0.8 r:0.03
        \move (3 -2) \fcir f:0.8 r:0.03
        \move (3 -1) \fcir f:0.8 r:0.03
        \move (3 0) \fcir f:0.8 r:0.03
        \move (3 1) \fcir f:0.8 r:0.03
                          
    \end{texdraw}\end{center}

   \vspace*{0.5cm}
   The Newton subdivision of the Newton polytope $\calN_2(P_f)$ is:
   \vspace*{0.5cm}

   \begin{center}
            
    \begin{texdraw}
       \drawdim cm  \relunitscale 0.5
       \linewd 0.05
        \move (1 6)        
        \lvec (2 3)
        \move (2 3)        
        \lvec (2 0)
        \move (2 0)        
        \lvec (0 0)
        \move (0 0)        
        \lvec (0 2)
        \move (0 2)        
        \lvec (1 6)

        \move (1 5)        
        \lvec (1 6)
        \move (1 4)        
        \lvec (1 5)
        \move (2 2)        
        \lvec (1 4)
        \move (1 4)        
        \lvec (0 0)
        \move (1 2)        
        \lvec (1 4)
        \move (2 2)        
        \lvec (1 2)
        \move (1 2)        
        \lvec (0 0)
        \move (0 0) \fcir f:0.6 r:0.1
        \move (0 1) \fcir f:0.6 r:0.1
        \move (0 2) \fcir f:0.6 r:0.1
        \move (0 3) \fcir f:0.6 r:0.1
        \move (0 4) \fcir f:0.6 r:0.1
        \move (0 5) \fcir f:0.6 r:0.1
        \move (0 6) \fcir f:0.6 r:0.1
        \move (1 0) \fcir f:0.6 r:0.1
        \move (1 1) \fcir f:0.6 r:0.1
        \move (1 2) \fcir f:0.6 r:0.1
        \move (1 3) \fcir f:0.6 r:0.1
        \move (1 4) \fcir f:0.6 r:0.1
        \move (1 5) \fcir f:0.6 r:0.1
        \move (1 6) \fcir f:0.6 r:0.1
        \move (2 0) \fcir f:0.6 r:0.1
        \move (2 1) \fcir f:0.6 r:0.1
        \move (2 2) \fcir f:0.6 r:0.1
        \move (2 3) \fcir f:0.6 r:0.1
        \move (2 4) \fcir f:0.6 r:0.1
        \move (2 5) \fcir f:0.6 r:0.1
        \move (2 6) \fcir f:0.6 r:0.1
       \move (1 6) 
       \fcir f:0 r:0.12
       \move (1 5) 
       \fcir f:0 r:0.12
       \move (2 3) 
       \fcir f:0 r:0.12
       \move (1 4) 
       \fcir f:0 r:0.12
       \move (2 2) 
       \fcir f:0 r:0.12
       \move (0 2) 
       \fcir f:0 r:0.12
       \move (0 1) 
       \fcir f:0 r:0.12
       \move (0 0) 
       \fcir f:0 r:0.12
       \move (1 2) 
       \fcir f:0 r:0.12
       \move (2 1) 
       \fcir f:0 r:0.12
       \move (2 0) 
       \fcir f:0 r:0.12
       \move (1 1) 
       \fcir f:0 r:0.12
       \move (1 0) 
       \fcir f:0 r:0.12
   \end{texdraw}
     
   \end{center}

To illustrate Theorem \ref{thm.1}, 
observe that $f_n(q)$ is a sequence of Laurent polynomials. 
The minimum (resp., maximum) degree $\d_n$ (resp., $\hat\d_n$) of $f_n(q)$
with respect to $q$ is given by:

\begin{equation*}
\d_n=0,  \qquad \hat\d_n=\frac{n(3n+1)}{2}
\end{equation*}
The coefficient $\lt_n$ (resp., $\widehat{\lt}_n$) of $q^{\d_n}$ (resp., 
$q^{\hat\d_n}$) in $f_n(q)$ is given by:

\begin{equation*}
\lt_n=n+1, \qquad \widehat{\lt}_n=(-1)^n
\end{equation*}
$\lt_n$ and $\widehat{\lt}_n$ are holonomic sequences that satisfy 
linear recursion relations with constant coefficients. 

To illustrate Theorem \ref{thm.4}, observe that there is a single $0$ slope
of length $2$ with edge polynomial $(L-1)^2$ and eigenvalue $1$ with 
multiplicity $2$. This is a resonant case. 
Theorem \ref{thm.4} dictates that we we substitute the WKB ansatz 
$$
\hat f_{n+j}(q)=\sum_{k=0}^\infty ( \phi_k(q) + (n+j) \psi_k(q)) q^{(n+j)k}
$$
in the recursion $P_f \hat f=0$, collect terms with respect to $M=q^k$
and with respect to $n$ and set them all equal to zero. It
follows that the vector $(\psi_k(q),\phi_k(q))$ satisfies a sixth order 
linear recursion relation:
\begin{eqnarray*}
\psi_k&=&\frac{1}{\left(1-q^k\right)^2}\left\{ q^{3+k} \psi_{-6+k}
q^{2+k} (1+q) \psi_{-5+k} + q^{1+k} \left(1+q+q^2\right) \psi_{-4+k}
+ q^{-2+k} \left(2 q^3+2 q^4-q^k\right) \psi_{-3+k} \right .
\\
& & \left.
-\left(-q^4-q^{2 k-2}+2 q^{k}+q^{1+k}+2 q^{2+k}+q^{-1+2 k}\right) \psi_{-2+k}
+\left(q^2+q^{2 k-1}-q^{k}-q^{1+k}+q^{2 k}\right) \psi_{-1+k} \right\} 
\\
\phi_k&=&\frac{1}{\left(1-q^k\right)^2}\left\{
-q^{3+k} \phi_{-6+k}
-q^{2+k} (1+q) \phi_{-5+k}
+q^{1+k} \left(1+q+q^2\right) \phi_{-4+k}
+q^{-2+k} \left(2 q^3+2 q^4-q^k\right) \phi_{-3+k} 
\right. \\
& & \left.
-\frac{\left(-q^6-q^{2 k}+2 q^{2+k}+q^{3+k}
+2 q^{4+k}+q^{1+2 k}\right) \phi_{-2+k}}{q^2}
+\frac{\left(q^3+q^{2 k}-q^{1+k}-q^{2+k}+q^{1+2 k}\right) \phi_{-1+k}}{q}
\right. \\
& & \left.
+\frac{q^{3+k} \left(1+q^k\right) \psi_{-6+k}}{\left(-1+q^k\right)}
+\frac{q^{2+k} (1+q) \left(1+q^k\right) \psi_{-5+k}}{\left(-1+q^k\right)}
-\frac{q^{1+k} \left(1+q+q^2\right) \left(1+q^k\right) \psi_{-4+k}}{\left(-1+q^k\right)}
\right. \\
& & \left.
-\frac{2 q^{-2+k} \left(q^3+q^4-q^k+q^{3+k}+q^{4+k}\right) \psi_{-3+k}}{\left(-1+q^k\right)}
\right. \\
& & \left.
+\frac{q^{-2+k} \left(2 q^2+q^3+2 q^4-2 q^6-2 q^k+2 q^{1+k}+2 q^{2+k}+q^{3+k}+2 q^{4+k}\right) \psi_{-2+k}}{\left(-1+q^k\right)}
\right. \\
& & \left.
+\frac{q^{-1+k} \left(q+q^2-2 q^3-2 q^k-q^{1+k}+q^{2+k}\right) \psi_{-1+k}}{\left(-1+q^k\right)} \right\}
\end{eqnarray*}
with an arbitrary initial condition $(\psi_0,\phi_0) \in \BQ[[q]]^2$. 
Note that $\psi_k$
the first equation above is a recursion relation for $\psi_k$ and the
second equation is a recursion for $\phi_k$ that also involves $\psi_{k'}$
for $k' < k$. This is a general feature of the WKB in the case of resonanse.
It follows that our particular solution \eqref{eq.fnq} of the 
$q$-difference equation $P_f f=0$ has the form:
\begin{equation}
\lbl{eq.fnexpansion}
f_n(q)=\sum_{k=0}^\infty ( \phi_k(q) + n \psi_k(q)) q^{n k} \in \BQ[[q]]
\end{equation}
where $(\psi_k,\phi_k)$ are defined the above recursion with suitable
initial conditions. Note that Equation \eqref{eq.fnexpansion} implies that
the coefficient $c_{m,n}=a_m + n b_m$ of $q^m$ in $f_n(q)$ is a linear 
function of $n$ for $n \geq m$. 
These values determine our initial conditions by:
$$
\phi_0(q)=\sum_{m=0}^\infty a_m q^m
$$
and
$$
\psi_0(q)=\sum_{m=0}^\infty b_m q^m
$$
In fact, a direct computation shows that the 
first 31 values of $a_m$ for $m=0, \dots, 30$ are given by:
$$
1, -1, -4, -9, -19, -33, -59, -93, -150, -226, -342, -494, -721, \
-1011, -1425, -1960, -2695, -3633, -4903, 
$$
$$
-6506, -8633, -11312, \
-14796, -19157, -24773, -31744, -40608, -51578, -65372, -82341, \
-103522
$$
and the first 31 values of $b_m$ for $m=0, \dots, 30$ are given by:
$$
1, 1, 2, 3, 5, 7, 11, 15, 22, 30, 42, 56, 77, 101, 135, 176, 231, \
297, 385, 490, 627, 792, 1002, 1255, 1575, 
$$
$$
1958, 2436, 3010, 3718, \
4565, 5604
$$
The reader may recognize (and also confirm by an explicit computation) that
$$
\psi_0(q)=\frac{1}{(q;q)_\infty}=\prod_{m=0}^\infty \frac{1}{1-q^m}
$$
An extra-credit problem is to give an explicit formula for the power series
$\phi_0(q)$.  Note finally that Equation \eqref{eq.fnq} implies that
the specialization of $f_n(q)$ at $q=1$ is given by:
$$
f_n(1)=1
$$
for all $n$, much like the case of the colored Jones polynomial of a knot,
\cite{GL}.

\subsection{Plan of the paper}
\lbl{sub.plan}

Theorem \ref{thm.1} follows from Theorems \ref{thm.2} and \ref{thm.4}. 

Theorem \ref{thm.2} follows by two reductions using a $q$-analogue of
Hensel's lemma, analogous to the case of linear differential operators.
 The first reduction factors an operator with arbitrary
Newton polygon into a product of operators with a Newton polygon with
a single slope. After a change of variables, we can assume that these
operators are regular-singular. The second reduction factors a 
regular-singular $q$-difference operator into a product of first
order regular-singular operators with eigenvalues of possibly equal
constant term. Thus, we may assume that the $q$-difference operator
is regular-singular with eigenvalues of equal constant term. In that
case, we can construct a basis of formal WKB solutions of the required
type. The above proof also factorizes a linear $q$-difference
operator into a product of first order $q$-difference operators,
giving a proof of Theorem \ref{thm.3}.

Theorem \ref{thm.4} follows from the Lech-Mahler-Skolem theorem
on the zeros of generalized exponential sums.

Finally, in Section \ref{sec.invariants}, which is independent of the results
of the paper, we discuss several invariants of $q$-holonomic sequences,
and compute them all in some simple examples.

\section{Proof of Theorem \ref{thm.2}}
\lbl{sec.thm2}

\subsection{Reduction to the case of a single slope}
\lbl{sub.monoslope}

Theorem \ref{thm.2} follows ideas similar to the case of linear differential
operators, presented for example in \cite[Sec.3]{vPS}. Rather than
develop the whole theory, we will highlight the differences
between the differential case and the $q$-difference case.

In the case of linear differential operators, the basic operators
$x$ and $x \pt_x$ satisfy the inhomogeneous commutation relation
\begin{equation}
\lbl{eq.xptx}
[x \pt_x,x]=x
\end{equation}
The right hand side of the above commutation relation leads 
to consider Newton polygons are convex hulls of translates of the 
{\em second quadrant} $\BR_- \times \BR^+$. For examples of such
polygons,  see \cite[Sec.3.3]{vPS}. As a result, the slopes of the Newton 
polygons of linear differential operators are always non-negative rational
numbers. On the other hand, in the $q$-difference case, the $q$-commutation 
relation \eqref{eq.qML}
preserves the $L$ and $M$ degree of a monomial in $M,L$.

In the case of linear differential operators of a single variable $x$, 
the WKB solutions involve power series in $x^{1/r}$ and polynomials in $\log x$.
In the case of linear $q$-difference operators, the WKB solutions involve 
power series in $q^n$ and polynomials in $n$. In addition, the convergence 
conditions are easier to deal with when $q=0$, and much harder when $q=1$.
Convergence when $q=1$ involves regularizing factorially divergent 
formal power series.

Consider a $q$-difference operator $P$ and its Newton polygon $N(P)$.
In this section, we will allow the coefficients $a_i(M,q)$ of $P$ from
Equation \eqref{eq.recP} to be arbitrary elements of the field 
$K\{\{M\}\}$. Recall that the Newton polygon $N(P)$
consists of finitely many segments and two rays. We will say that $P$
is {\em slim} if all its monomials lie in the bounded segments of its
Newton polygon. We will say that $P_1 > P_2$ if the boundary of $N(P_1)$
lies in the interior of $P_2$ or in the interior of the two rays of $P_2$.
We will say that an operator $P$ from Equation \eqref{eq.recP} is 
{\em monic} if $a_d(M,q)=1$. Recall the {\em Minkowski sum} 
$$
N_1+N_2=\{a+b|\,\, a \in N_1, \,\, b \in N_2\}
$$
of two polytopes in the plane. Here and below, we will always consider
the Minkowski sum of two convex polytopes that arise from linear $q$-difference
operators, i.e., polytopes that consist of two vertical rays and some
bounded segments.

The next lemma is a $q$-version of Hensel's lifting lemma.

\begin{lemma}
\lbl{lem.hensel1}
If $P$ is monic and slim, for every partition of its set of slopes in
two different sets $S_1$ and $S_2$ there exist unique slim and monic operators
$P_1$ and $P_2$ with slopes $S_1$ and $S_2$ respectively,
and an operator $R(P)$ so that
$$
P=P_1 P_2 + R(P), \qquad R(P) > P.
$$
\end{lemma}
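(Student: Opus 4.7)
The plan is to adapt the Hensel-style factorization argument for linear differential operators, as presented in \cite[Sec.3.3]{vPS}, to the $q$-Weyl setting, replacing the commutation $[x\pt_x,x]=x$ by $LM=qML$. Existence will be established by Newton-style successive approximation, and uniqueness by a linearization argument; both reduce to invertibility of the same linear correction map.

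Order the slopes of $N(P)$ so that, up to renaming, every slope in $S_1$ precedes every slope in $S_2$. Let $P_1^{(0)}$ be the slim monic operator whose Newton polygon realizes the slope multiset $S_1$ and whose edge polynomials along the slopes of $S_1$ coincide with those of $P$, and define $P_2^{(0)}$ analogously for $S_2$. Because $S_1$ and $S_2$ are disjoint and ordered, the Minkowski sum $N(P_1^{(0)})+N(P_2^{(0)})$ equals $N(P)$, and a direct computation shows that the edge polynomials of $P_1^{(0)}P_2^{(0)}$ along each slope of $N(P)$ reproduce those of $P$ (the $q^{ik}$ twists arising from $L^iM^k=q^{ik}M^kL^i$ only rescale each edge by a uniform factor, which is absorbed in the choice of the edge polynomials of $P_i^{(0)}$). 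Hence the initial remainder $R^{(0)}:=P-P_1^{(0)}P_2^{(0)}$ satisfies $R^{(0)}>P$.

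Given any approximation with $P=P_1 P_2+R$ and $R>P$, I seek corrections $\Delta_i>P_i$ so that $(P_1+\Delta_1)(P_2+\Delta_2)$ matches $P$ to strictly greater depth. Since $\Delta_1\Delta_2$ is automatically deeper than either summand, the linearization reads
\begin{equation*}
P_1\Delta_2+\Delta_1 P_2\equiv R\pmod{\text{depth below that of }R}.
\end{equation*}
The key structural point is that because $P_1$ and $P_2$ carry disjoint slope multisets, each slope edge appearing in $R$ receives a nonzero contribution from exactly one of the two summands on the left-hand side, while the other summand contributes only strictly lower slopes. This makes the edge-by-edge $K$-linear equation triangular and uniquely solvable, the inverse being controlled by the leading coefficient of the complementary $P_i$ (which is a unit in $K\{\{M\}\}$ since $P_i$ is monic). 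Iterating and noting that the depth of the remainder strictly decreases at each step, the process converges in the $M$-adic topology on $K\{\{M\}\}$, producing the desired $P_1,P_2,R(P)$.

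Uniqueness runs on the same linear algebra: two factorizations $P_1P_2+R=P_1'P_2'+R'$ with $\Delta_i=P_i'-P_i>P_i$ and $R,R'>P$ give $P_1\Delta_2+\Delta_1P_2+\Delta_1\Delta_2=R-R'$, which at top depth is a homogeneous instance of the linear equation above and forces the leading parts of $\Delta_1,\Delta_2$ to vanish; induction on depth yields $\Delta_1=\Delta_2=0$. The main obstacle is the bookkeeping needed to verify the edge-wise invertibility cleanly, in particular tracking the $q^{ik}$ twists arising from $LM=qML$ so that the linear map on each edge reduces to multiplication by a unit; once this is in place, the remainder of the argument parallels the differential case almost verbatim.
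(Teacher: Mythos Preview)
You have conflated this lemma with Proposition~\ref{prop.hensel1}. The lemma concerns \emph{slim} operators: $P$, $P_1$, $P_2$ are all supported on the bounded edges of their Newton polygons, so each has only finitely many monomials. The condition $R(P)>P$ says precisely that $P_1P_2$ and $P$ agree at every lattice point of $N(P)$; thus the lemma is a \emph{finite} system of $K$-equations (one per lattice point on $N(P)$) in finitely many unknowns (the coefficients of $P_1$ and $P_2$). The paper's proof simply observes that this system is triangular and uniquely solvable, the $q$-commutation $L^iM^k=q^{ik}M^kL^i$ contributing only unit factors; the worked example makes the triangular structure explicit. No iteration, no $M$-adic topology, no convergence is involved.

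Your paragraphs~2--4 set up a Newton iteration with corrections $\Delta_i>P_i$ aimed at driving the remainder to zero. But once you add $\Delta_i>P_i$, the operator $P_i+\Delta_i$ is no longer slim, so you have left the category the lemma is about; what you are sketching is the proof of Proposition~\ref{prop.hensel1}, which indeed proceeds by iterating the present lemma. Your paragraph~1 is the only part relevant to the lemma, and there the construction is not correct as written: the edge polynomial of $P_1^{(0)}P_2^{(0)}$ along a slope $s\in S_1$ is (up to $q$-twist) the product of the $s$-edge of $P_1^{(0)}$ with the \emph{vertex monomial} of $P_2^{(0)}$ facing direction $s$, not the $s$-edge of $P_1^{(0)}$ alone. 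Consequently, declaring that the edges of $P_1^{(0)}$ ``coincide with those of $P$'' does not make the edges of the product match $P$; one must first determine $P_2$ from the $S_2$-edges (using that the rightmost vertex of $P_1$ carries coefficient $1$ by monicness) and only then solve for $P_1$ from the $S_1$-edges, dividing by the now-known leftmost vertex coefficient of $P_2$. This is exactly the triangular solve visible in the paper's example ($y_0=b$, $qy_1=c$, then $x_0y_0=a$). Finally, your reduction ``up to renaming, every slope in $S_1$ precedes every slope in $S_2$'' is not a renaming: the product $P_1P_2$ is ordered and the lemma allows arbitrary partitions of the slope set.
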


\begin{proof}
This follows as in the differential case. The only difference is
that monomials $M^a L^b$ and $M^{a'} L^{b'}$ commute up to a power of $q$.
The next example will explain the proof. Suppose $P=a M + b L + c M^2 L
+ M^3 L^2$ is a monic slim operator with Newton polygon $N$ with two 
slopes, where $a,b,c \in K$. Now $N=N_1+N_2$ shown as follows:
$$
\psdraw{newton}{4in}
$$
We want to find unique
$x_0,y_0,y_1,y_2 \in K$ so that 
$$
a M + b L + c M^2 L + M^3 L^2=(x_0 M + L)(y_0 + y_1 ML + y_2 M^2 L^2).
$$
Multiplying out, we obtain that
$$
a M + b L + c M^2 L + M^3 L^2=x_0 y_0 M + y_0 L + y_1 LML +R(P),
\qquad
R(P)=x_0 y_1 M^2 L + x_0 M^3L^2 + L M^2 L^2 > P.
$$
Thus, we obtain the system of equations
$$
x_0 y_0=a, \qquad y_0=b, \qquad q y_1=c.
$$
The above system can be solved uniquely in $K$ whether $q$ is present or not.
\end{proof}

\begin{proposition}
\lbl{prop.hensel1}
Suppose $P$ is monic and $N(P)=N_1+N_2$ where $N_1$ and $N_2$ have no
slope in common. Then there exist unique monic operators $P_1$ and $P_2$
with Newton polygons $N_1$ and $N_2$ 
such that $P=P_1 P_2$. In addition, we have:
$$
\calD/\calD P=\calD/\calD P_1 \oplus \calD/\calD P_2.
$$
\end{proposition}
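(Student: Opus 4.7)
The plan is to bootstrap Lemma \ref{lem.hensel1} from an approximate factorization into an exact one. I start by applying the lemma to the given partition $S_1 \sqcup S_2$ of slopes to obtain slim monic $P_1^{(0)}, P_2^{(0)}$ with Newton polygons $N_1, N_2$ and remainder $R_0 = P - P_1^{(0)} P_2^{(0)}$ supported strictly above $N(P)$. Inductively, given $P_i^{(k)}$ with $R_k := P - P_1^{(k)} P_2^{(k)}$ pushed further up, I seek corrections $\Delta_i^{(k)}$ supported above $N_i$ so that the new remainder $R_{k+1}$ is pushed still higher. Expanding the product, the leading-order condition is $P_1^{(k)} \Delta_2^{(k)} + \Delta_1^{(k)} P_2^{(k)} \equiv R_k$ modulo higher-order terms, which is exactly the linear problem solved by Lemma \ref{lem.hensel1} applied to an appropriate translate.

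Convergence takes place in the decreasing filtration on $\calD$ whose $k$-th piece consists of operators whose Newton polygon lies above the $k$-th translate of $N(P)$. Because the coefficient ring $K\{\{M\}\}$ is complete in its $M$-adic topology, this filtration is complete and Hausdorff on $\calD$, and the telescoping sums $P_i = P_i^{(0)} + \sum_{k \ge 0} \Delta_i^{(k)}$ converge to monic operators with Newton polygons $N_i$ satisfying $P = P_1 P_2$. Uniqueness follows by induction on the filtration level: a second factorization $P = P_1' P_2'$ reduces modulo each stage to the Hensel problem of Lemma \ref{lem.hensel1}, whose solution is unique.

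For the direct sum decomposition, I apply the existence statement a second time to the reversed partition $(S_2, S_1)$, obtaining a dual factorization $P = Q_2 Q_1$ with $N(Q_1) = N_1$ and $N(Q_2) = N_2$. The quotient maps $\calD/\calD P \twoheadrightarrow \calD/\calD P_2$ (well-defined since $\calD P = \calD P_1 P_2 \subseteq \calD P_2$) and $\calD/\calD P \twoheadrightarrow \calD/\calD Q_1$ (well-defined since $\calD P \subseteq \calD Q_1$) combine with an isomorphism $\calD/\calD Q_1 \cong \calD/\calD P_1$ to produce a candidate $\calD$-linear map
$$\Phi: \calD/\calD P \longrightarrow \calD/\calD P_1 \oplus \calD/\calD P_2.$$
Both sides have the same $\calD$-length, namely the $L$-order of $P$, so it suffices to check surjectivity of $\Phi$.

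The main obstacle is to establish both the isomorphism $\calD/\calD Q_1 \cong \calD/\calD P_1$ and the surjectivity of $\Phi$; both come down to showing that $P_1$ and $Q_2$ are \emph{left-coprime}, that is, $\calD P_1 + \calD Q_2 = \calD$. This is precisely where the disjoint-slopes hypothesis is decisive: any common left divisor of $P_1$ and $Q_2$ would carry a Newton polygon whose slopes lie simultaneously in $S_1$ and $S_2$, forcing it to be a unit. I would make this rigorous by running a Bezout-style Hensel iteration parallel to the factorization iteration: at each order, disjointness of slopes makes the linear system for the successive corrections of $U, V$ in $U P_1 + V Q_2 \equiv 1$ nondegenerate, guaranteeing solvability and convergence in the same filtered topology as before.
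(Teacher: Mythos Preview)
Your approach is essentially the one the paper invokes by citation: the paper's proof simply says the result follows verbatim from the proof of Theorem 3.48 in van der Put--Singer using Lemma \ref{lem.hensel1}, and your Hensel iteration for the factorization followed by a slope-based coprimality argument for the module splitting is precisely that argument.

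Two small bookkeeping corrections. First, the obstruction to $\calD P_1 + \calD Q_2 = \calD$ is a nontrivial common \emph{right} factor of $P_1$ and $Q_2$ (since $\calD P_1 \subseteq \calD R$ forces $P_1 = AR$), not a common left factor; your slope argument then applies correctly, because $N(AB)=N(A)+N(B)$ survives the $q$-commutation. Second, the isomorphism $\calD/\calD Q_1 \cong \calD/\calD P_1$ (realized by right multiplication by $P_2$) and the surjectivity of $\Phi$ are most directly governed by coprimality of the pair $(Q_1, P_2)$ rather than $(P_1, Q_2)$; both pairs are left-coprime by the same disjoint-slopes reasoning, so nothing is lost, but you should track which Bezout identity you are actually using at each step.
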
 

\begin{proof}
It follows verbatim from the proof of Theorem 3.48 in \cite{vPS}
using Lemma \ref{lem.hensel1}.
\end{proof}

\subsection{Reduction to the case of a single eigenvalue}
\lbl{sub.onelambda}

Suppose that $P$ is a linear $q$-difference operator with a single slope.
After a change of variables $f_n(q)=q^{n^2 \ga+ n\eta} g_n(q)$, we can assume that
the slope is zero, i.e. that $P$ is regular singular. In other words,
if $P$ is given by \eqref{eq.rec2}, then $a_d(0,q) a_0(0,q) \neq 0$.
Then, the slim part $S(P)$ of $P$ is given by
$$
S(P)=\sum_{i=0}^d a_i(0,q) L^i
$$
where $P-S(P)>P$. We may think of $S(P)$ as a polynomial in a variable
$L$ with coefficients in the field $K$. Suppose that $S(P)$ is monic
and we can factor $S(P)=A B$ as the product of two relatively prime monic
polynomials. 

\begin{proposition}
\lbl{prop.hensel2}
With the above assumptions there exist unique monic regular-singular
operators $P_1$ and $P_2$ such that $P=P_1 P_2$ and $S(P_1)=A$ and
$S(P_2)=B$. Moreover,
$$
\calD/\calD P=\calD/\calD P_1 \oplus \calD/\calD P_2.
$$
\end{proposition}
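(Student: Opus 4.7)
The plan is to mirror the differential-operator proof (compare \cite[Thm.~3.52]{vPS}) by an iterative Hensel lifting adapted to the $q$-commutation $LM = qML$. I would proceed in two main parts: first construct the factorization $P = P_1 P_2$ by successive $M^{1/r}$-adic approximation, and then deduce the direct-sum decomposition from a lifted Bezout identity.

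For the factorization, I would start from $P_1^{(0)} := A$, $P_2^{(0)} := B$, viewed as elements of $\calD$ via $K[L] \subset \calD$. Since $S(P) = AB$, one has $P = AB + R_0$ where every coefficient of $R_0$ lies in $M^{1/r} K[[M^{1/r}]]$. Inductively, given regular-singular monic $P_1^{(k)}, P_2^{(k)}$ with $S(P_1^{(k)}) = A$, $S(P_2^{(k)}) = B$, and $P \equiv P_1^{(k)} P_2^{(k)} \pmod{M^{k/r}}$, I seek corrections $P_i^{(k+1)} = P_i^{(k)} + M^{k/r} \delta_i$ with $\delta_1, \delta_2 \in K[L]$ of $L$-degrees strictly less than $\deg A$ and $\deg B$ respectively. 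Expanding the product and using $LM = qML$ to push all $M^{1/r}$-powers to the right (which produces only scalar factors $q^j \in K^\times$), the correction equation at order $M^{k/r}$ collapses to a Bezout identity
\[
\delta_1 B + A \delta_2 \;=\; \rho_k \qquad \text{in } K[L],
\]
with $\rho_k$ read off from $R_k$. Coprimality of $A$ and $B$ in $K[L]$ delivers a unique solution, and $M^{1/r}$-adic completeness of $K[[M^{1/r}]]$ furnishes the limits $P_1, P_2$. Each iterate remains a polynomial in $L$ of bounded degree with coefficients in $K[[M^{1/r}]]$, so the limits have horizontal Newton polygons and are regular-singular. Uniqueness is inherited from uniqueness at each Bezout step.

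The step I expect to demand the most care is the normal-ordering bookkeeping: the relation \eqref{eq.qML} injects scalar factors $q^j \in K^\times$ whenever one pushes $L$-powers past $M$-powers, and one must verify that these factors do not destroy the reduction of the correction step to a Bezout identity in $K[L]$ (as opposed to something with leftover $M$-dependence). Because $q$ is a unit and \eqref{eq.qML} preserves the total $(M,L)$-bidegree, all the $q$-powers simply rescale the coefficients of $\rho_k$, and the Bezout step goes through unchanged.

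For the direct-sum decomposition, applying the same Hensel lifting with the roles of $A$ and $B$ reversed yields a second factorization $P = Q_2 Q_1$ with $S(Q_1) = A$, $S(Q_2) = B$. Moreover, the Bezout identity $CA + DB = 1$ in $K[L]$ (from $\gcd(A,B)=1$) lifts by an analogous iteration---starting from $(C,D)$ and correcting successively at each $M^{k/r}$-order---to an identity $\tilde C P_1 + \tilde D P_2 = 1$ in $\calD$; in particular $\calD P_1 + \calD P_2 = \calD$. Because $\calD$ is a skew-polynomial domain over a field, the standard Chinese-Remainder argument---following the differential template \cite[Thm.~3.52]{vPS}---combines this Bezout identity with the two compatible factorizations $P = P_1 P_2 = Q_2 Q_1$ to produce the direct-sum decomposition $\calD/\calD P \cong \calD/\calD P_1 \oplus \calD/\calD P_2$.
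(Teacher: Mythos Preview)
Your proposal is correct and takes essentially the same approach as the paper: both adapt the Hensel-lifting argument from \cite{vPS} to the $q$-difference setting, using the observation that the $q$-commutation relation \eqref{eq.qML} only introduces unit scalars and hence does not disturb the Bezout step. The paper's proof is a one-line citation of \cite[Prop.~3.50]{vPS} together with Lemma~\ref{lem.hensel1}; you have simply spelled out what that citation entails.
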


\begin{proof}
It follows verbatim from the proof of Proposition 3.50 in \cite{vPS}
using Lemma \ref{lem.hensel1}.
\end{proof}

\subsection{First order linear $q$-difference equation}
\lbl{sub.firstorder}

In this section we will prove Theorem \ref{thm.2} for the first order
linear $q$-difference equation
$$
f_{n+1}(q)-b(q^n,q) f_n(q)=0
$$
where $b(M,q) \in K((M))$. If $b(M,q)=M^{\ga} c(M,q)$
where $c(M,q) \in K[[M]]$ and  $c(0,q) \neq 0$
the substitution $f_n(q)=q^{n^2 \ga/2} c(0,q)^n g_n(q)$ gives the equation
\begin{equation}
\lbl{eq.order1}
g_{n+1}(q)-a(q^n,q) g_n(q)=0
\end{equation}
where $a(M,q)=q^{\ga/2} \frac{c(M,q)}{c(0,q)} \in K[[M]]$ satisfies
$a(0,q)=1$.
A solution to Equation \eqref{eq.order1} is
\begin{eqnarray*}
f_n(q) &=& a(q,q) a(q^2,q) \dots a(q^{n-1},q) \\
&=& \frac{\prod_{k=1}^\infty a(q^k,q)}{a(q^n,q) a(q^{n+1},q) \dots}
\end{eqnarray*}
Now
$$
a(M,q)=1-\sum_{k=1}^\infty M^k a_k(q)
$$
where $a_k(q) \in K$, implies that 
$$
\frac{1}{a(M,q)}=1+\sum_{l=1}^\infty 
\left(\sum_{k=1}^\infty M^k a_k(q)\right)^l \in u K[[u]]
$$
Consequently, 
\begin{equation}
\lbl{eq.fMq}
f(M,q)=\frac{1}{\prod_{i=0}^\infty a(Mq^i,q)} \in K[[M]]
\end{equation}
Let us now match this solution with a WKB sum. Consider
$$
f(u,q)=\sum_{k=0}^\infty \phi_k(q) u^k \in K[[u]]
$$
where $\phi_0=1$ and substitute into the equation 
$$
f(uq,q)-a(u,q) f(u,q)=0
$$
we obtain an equation in the ring $K[[u]]$. 
The vanishing of the coefficient of $u^k$ gives
$$
\phi_k=\frac{1}{1-q^k}\sum_{i=1}^k a_i \phi_{k-i}
$$
Thus the WKB solution matches exactly the solution \eqref{eq.fMq}
for the first-order regular singular equation \eqref{eq.order1}.

\subsection{Proof of Theorem \ref{thm.2}}
\lbl{sub.thm2.proof}

Propositions \ref{prop.hensel1} and \ref{prop.hensel2} and a possible
replacement of $q$ by $q^{1/r}$ reduce Theorem
\ref{thm.2} to the case of a monic regular-singular differential operator
of the form
\begin{equation}
\lbl{eq.dorder}
P=\prod_{i=1}^d (L-a_i(M,q))
\end{equation}
where $a_i(M,q) \in K[[M]]$ for all $i$ and $a(0,q)=1$. It suffices
to find a basis of solutions of the equation $Pf=0$ of the form
$$
f(u,q)=\sum_{k=0}^\infty \phi_k(n,q) u^k
$$
where $\phi_k(n,q) \in K[n]$ are polynomials of degree at most $d$.
We will use induction on $d$. When $d=1$, this is proven in Section
\ref{sub.firstorder}.
Let $Q=\prod_{i=2}^d (L-a_i(M,q))$. If we set $g_n(q)=f_{n+1}(q)-
a_1(q^n,q)f_n(q)$ then it $Pf=0$ implies that $Qg=0$. Conversely, the system
$Pf=0$ is equivalent to the system of equations
\begin{equation}
\lbl{eq.PQ}
\begin{cases} 
f(uq,q)-a_1(u,q)f(u,q)=g(u,q) \\
Qg(u,q)=0
\end{cases}
\end{equation}
Now, by induction we have
$$
g(u,q)=\sum_{k=0}^\infty \psi_k(n,q) u^k
$$
where $\psi_k(n,q) \in K[n]$ are polynomials of degree at most $d-1$.
Set $f(u,q)=\sum_{k=0}^\infty \phi_k(n,q) u^k$ 
where $\phi_k(n,q) \in K[n]$ are polynomials of degree at most $d$
in the first Equation \eqref{eq.PQ}. Both sides are elements of $K[[u]][n]$.
The vanishing of each coefficient of $u^k n^i$ computes the $\phi_k$ in terms 
of the $\psi_k$ by a rank 1 linear system of equations. It follows that
we can find a $d$-dimensional linear space of WKB series solutions of $Pf=0$.
This concludes the proof of Theorem \ref{thm.2}.

\subsection{The regular-singular non-resonant case}
\lbl{sub.regularsingular}

For completeness, we give a short proof of Theorem \ref{thm.2}
in the case of a {\em non-degenerate regular-singular operator}. 
Recall from Section \ref{sub.newton} that a regular-singular $q$-difference 
operator has the form
\begin{equation}
\lbl{eqw.nondeg}
P=\sum_{i=0}^d a_i(M,q) L^i
\end{equation}
where $a_i(M,q) \in K[[M^{1/r}]]$ for all $i$ and $a_0(0,q) a_d(0,q) \neq 0$.
The {\em characteristic polynomial} $\chi(x,M,q)$ of of a regular-singular
operator $P$ is given by
\begin{equation}
\lbl{eq.charpoly}
\chi(x,M,q):=\sum_{i=0}^d a_i(M,q) x^i
\end{equation} 
The {\em eigenvalues} of $P$ are the roots of the equation $\chi(x,M,q)=0$. 
Since $K$ is algebraically closed, Puiseux's theorem implies that the
eigenvalues of $P$ are elements of the field $K\{\{q\}\}$; see
\cite{Wa}. In other words, 
a regular-singular operator $P$ has $d$
eigenvalues $\l_i(M,q) \in K((M^{1/r'}))$ for some $r'$. 
After we replace $r$ by $r'$, we assume $r'=r$ below.
We say that a regular singular operator $P$ is {\em non-degenerate}
if its eigenvalues satisfy the {\em non-resonanse condition}
that $\l_j(0,q)/\l_i(0,q) \neq q^k$ for all $i \neq j$
and all $k$.

Suppose that $P$ is a non-degenerate regular-singular operator as above.
Consider Equation \eqref{eq.rec2}
\begin{equation}
\lbl{eq.rec22}
a_d(u,q) f(uq^d,q) + \dots + a_0(u,q) f(u,q)=0
\end{equation}
We will show that the general solution to \eqref{eq.rec22} is a WKB sum. 
Without loss of generality we assume $r=1$. We will construct a basis of 
solutions of the Equation \eqref{eq.rec22} of the form
\begin{equation}
\lbl{eq.fuq}
f(u,q)=\l(q)^n \sum_{k=0}^\infty \phi_k(q) u^k
\end{equation}
that satisfy $\phi_0(q)=1$. 
Here $n=\log u/\log q$. The point is that although
$f(u,q) \not\in K[[u]]$, the ratio
$f(uq^j,q)/\l(q)^n \in K[[u]]$ for every $j$. This follows from Equation
\eqref{eq.LMf}. Substitute \eqref{eq.fuq} into \eqref{eq.rec2}
and divide by $\l(q)^n$. The result is an equation in $K[[u]]$.
The vanishing of the coefficient of $u^0$ gives
$$
\sum_{j=0}^d a_j(0,q) \l(q)^j=0
$$
i.e., $x=\l(q)$ is a root of the equation $\chi(x,0,q)=0$.
The vanishing of the coefficient of $u^k$ for $k>0$ is
the condition
\begin{equation}
\lbl{eq.phik}
\phi_k(q) c_{0,k}(q) + \sum_{i=1}^k \phi_{k-i}(q) c_{i,k-i}(q)=0
\end{equation}
where 
$$
c_{i,j}(q)=(d/du)^i \chi(x,u,q)|_{(x,u)=(q^j\l(q),0)}.
$$
Using the initial condition $\phi_0=1$, it follows inductively that
$$
\psi_k(q):=\phi_k(q) \prod_{j=1}^k c_{0,j}(q)
$$ 
is a homogeneous polynomial of total degree 
$k$ in the variables $c_{i,j}(q)$, where the degree of $c_{i,j}(q)$ is $i+j$.
For example, we have:
\begin{eqnarray*}
\psi_1 &=& -c_{1,0}
\\
\psi_2 &=& c_{1,0} c_{1,1}-c_{0,1} c_{2,0}
\\
\psi_3 &=&
-c_{1,0} c_{1,1} c_{1,2}+c_{0,1} c_{1,2} c_{2,0}+c_{0,2} c_{1,0} c_{2,1}-c_{0,1} c_{0,2} c_{3,0}
\\
\psi_4 &=& 
c_{1,0} c_{1,1} c_{1,2} c_{1,3}-c_{0,1} c_{1,2} c_{1,3} c_{2,0}-c_{0,2} c_{1,0} c_{1,3} c_{2,1}-c_{0,3} c_{1,0} c_{1,1} c_{2,2}+c_{0,1} c_{0,3} c_{2,0} c_{2,2}
\\
& & +c_{0,1} c_{0,2} c_{1,3} c_{3,0}
+c_{0,2} c_{0,3} c_{1,0} c_{3,1}-c_{0,1} c_{0,2} c_{0,3} c_{4,0}
\\
\psi_5 &=& 
-c_{1,0} c_{1,1} c_{1,2} c_{1,3} c_{1,4}+c_{0,1} c_{1,2} c_{1,3} c_{1,4} c_{2,0}+c_{0,2} c_{1,0} c_{1,3} c_{1,4} c_{2,1}+c_{0,3} c_{1,0} c_{1,1} c_{1,4} c_{2,2}
\\ & &
-c_{0,1} c_{0,3} c_{1,4} c_{2,0} c_{2,2}+c_{0,4} c_{1,0} c_{1,1} c_{1,2} c_{2,3}-c_{0,1} c_{0,4} c_{1,2} c_{2,0} c_{2,3}-c_{0,2} c_{0,4} c_{1,0} c_{2,1} c_{2,3}
\\ & &
-c_{0,1} c_{0,2} c_{1,3} c_{1,4} c_{3,0}+c_{0,1} c_{0,2} c_{0,4} c_{2,3} c_{3,0}-c_{0,2} c_{0,3} c_{1,0} c_{1,4} c_{3,1}-c_{0,3} c_{0,4} c_{1,0} c_{1,1} c_{3,2}
\\ & &
+c_{0,1} c_{0,3} c_{0,4} c_{2,0} c_{3,2}+c_{0,1} c_{0,2} c_{0,3} c_{1,4} c_{4,0}+c_{0,2} c_{0,3} c_{0,4} c_{1,0} c_{4,1}-c_{0,1} c_{0,2} c_{0,3} c_{0,4} c_{5,0}
\end{eqnarray*}

Note that
$c_{0,k}=\chi(q^k\l(q),0,q) \neq 0$ for all $k$ due to the non-degeneracy of 
\eqref{eq.rec22}. It remains to show that there exists $c$ such that
$\d(\phi_k(q)) \geq c k $ for all $k$. To prove this, observe that
$$
c_{0,k}(q)=\sum_{j=0}^d a_j(0,q) (q^k \l(q))^j
$$
satisfies that 
$$
\d(c_{0,k})=\min_{0 \leq j \leq d}\{\d(a_j(0,q)\l(q)^j )+k j \} 
$$
is attained at least twice. Since $a_0(0,q) \neq 0$, it follows that for large
$k$, the above minimum is attained at $j=0$. In other words, we have
$$
\d(c_{0,k})=c:=\d(a_0(0,q)).
$$
Thus, 
$$
\d(\prod_{i=1}^k c_{0,i}(q)) \geq ck+c'
$$
for all $k$ large enough. On the other hand, $\d(c_{i,j}(q)) \geq 0$
for all $i,j$. The above formulas imply 
that $\d(\phi_k(q)) \geq c'k$ for all $k$.
Thus, for every eigenvalue $\l(0,q)$ of $P$ there is a unique WKB solution
\eqref{eq.fuq} of Equation \eqref{eq.rec22}. Since 
there are $d$ distinct 
eigenvalues, the constructed solutions form a basis for the vector
space of solutions of \eqref{eq.rec22}. This gives a proof of Theorem 
\ref{thm.2} in the non-degenerate regular-singular case.

\begin{remark}
\lbl{rem.compareODE}
In the case of regular-singular linear differential equations, expressions
of the form \eqref{eq.phik} appear. However, one needs to 
estimate the numerator of those expressions, as well as the denominator.
See for example, \cite{GG} and references therein. 
This explains why the WKB theory of $q$-difference equations at $q=0$
is much simpler than the corresponding theory at $q=1$.
\end{remark}

\section{Proof of Theorem \ref{thm.4}}
\lbl{sec.thm3}

\subsection{Generalized power sums}
\lbl{sub.gps}

An important special case of Theorem \ref{thm.1} is the case of a linear
recursion with constant coefficients. In this rather trivial case, for
every $n$, $f_n(q)$ is a constant function of $q$, so the degree is easy
to compute. 

Generalized power sums play a key role to the SML theorem. For a detailed
discussion, see \cite{vP} and also \cite{EvPSW}. Recall that a
{\em generalized power sum} $a_n$ for $n=0,1,2,\dots$ is an expression
of the form
\begin{equation}
\lbl{eq.gps}
a_n=\sum_{i=1}^m A_i(n) \a_i^n
\end{equation}
with roots $\a_i$, $1 \leq i \leq m$, distinct nonzero quantities,
and coefficients $A_i(n)$ polynomials respectively of degree $n(i)-1$ for
positive integers $n(i)$, $1 \leq i \leq m$. The generalized power sum $a_n$
is said to have order
$$
d=\sum_{i=1}^m n(i)
$$
and satisfies a linear recursion with constant coefficients of the form
$$
a_{n+d}=s_1 a_{n+d-1} + \dots + s_d a_n
$$
where 
$$
s(x)=\prod_{i=1}^m (1-\a_i x)^{n(i)}=1-s_1x-\dots s_d x^d.
$$
It is well known that a sequence satisfies a linear recursion with constant
coefficients if and only if it is a generalized power sum. The LMS theorem
concerns the zeros of a generalized power sum.

\begin{theorem}
\lbl{thm.SML}\cite{Sk,Ma,Le}
The zero set of a generalized power sum is the union of a finite set and
a finite set of arithmetic progressions.
\end{theorem}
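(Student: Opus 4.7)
The plan is to follow the classical \emph{Skolem--Mahler--Lech} strategy, which proves the result via $p$-adic analysis. First I would reduce to a number-field setting. All roots $\alpha_i$ and all coefficients of the polynomials $A_i(n)$ lie in some number field $K$, so the entire sequence $(a_n)$ takes values in $K$. Choose a rational prime $p$ and a place $\mathfrak{p}$ of $K$ above $p$ so that every $\alpha_i$ is a $\mathfrak{p}$-adic unit; only finitely many primes are excluded, so such a $p$ exists. Let $K_\mathfrak{p}$ denote the completion, with ring of integers $\O_\mathfrak{p}$ and residue field of cardinality $Q$. Set $M = Q - 1$, so that $\alpha_i^M \equiv 1 \pmod{\mathfrak{p}}$ for every $i$.

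Next I would stratify $\BN$ into the $M$ arithmetic progressions $\{mM + r : m \in \BN\}$, $r = 0,1,\dots,M-1$. On each progression, write
\begin{equation*}
a_{mM+r} \;=\; \sum_{i=1}^m A_i(mM+r)\,\alpha_i^r\,(\alpha_i^M)^m.
\end{equation*}
Because $\alpha_i^M \in 1 + \mathfrak{p}\O_\mathfrak{p}$, the function $m \mapsto (\alpha_i^M)^m$ extends from $m \in \BN$ to all of $m \in \BZ_p$ as a convergent $p$-adic power series, either via $(1+\pi_i)^m = \sum_k \binom{m}{k}\pi_i^k$ with $\pi_i = \alpha_i^M - 1$, or equivalently via $\exp_p(m\log_p \alpha_i^M)$. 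The polynomial prefactor $A_i(mM+r)$ is trivially $p$-adic analytic in $m$. Hence for each $r$, the sequence $m \mapsto a_{mM+r}$ is the restriction to $\BN$ of a $p$-adic analytic function $F_r : \BZ_p \to K_\mathfrak{p}$.

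Finally I would invoke \emph{Strassmann's theorem} (equivalently, the $p$-adic Weierstrass preparation theorem): a $p$-adic analytic function on $\BZ_p$ that is not identically zero has only finitely many zeros. Applied to each $F_r$, this dichotomy gives either $F_r \equiv 0$, in which case every $n \equiv r \pmod{M}$ is a zero of $(a_n)$ and the whole arithmetic progression is captured, or $F_r$ contributes only finitely many zeros of $(a_n)$. Taking the union over the $M$ residue classes yields the claimed decomposition of the zero set as a finite union of arithmetic progressions together with a finite exceptional set.

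The main obstacle I would expect is the careful choice of the prime $p$ and of the multiplier $M$: one must simultaneously arrange that every $\alpha_i$ is a $\mathfrak{p}$-adic unit and that the $p$-adic logarithm (or binomial series) converges on the shifted roots, so that the analytic continuation step is legitimate. Once that setup is in place, Strassmann's theorem is the standard finiteness input and the rest of the argument is routine bookkeeping over the $M$ residue classes.
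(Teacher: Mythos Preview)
The paper does not actually supply a proof of this theorem: it is quoted as a classical result with citations to Skolem, Mahler, and Lech, and is then used as a black box in the proof of Theorem~\ref{thm.4}. So there is no ``paper's own proof'' to compare against.

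Your outline is the standard $p$-adic argument and is essentially correct. Two small points. First, you have a notational collision: the paper already uses $m$ for the number of distinct roots in \eqref{eq.gps}, and you reuse $m$ as the index along each residue class, so your displayed sum $\sum_{i=1}^m$ should read $\sum_{i=1}^{m'}$ or similar. Second, your reduction ``all $\alpha_i$ and all coefficients of the $A_i$ lie in some number field $K$'' presupposes the algebraic case (Skolem--Mahler); Lech's full statement covers any field of characteristic~$0$, and one handles that by noting that the finitely many data generate a finitely generated extension of $\BQ$ which, by a specialization/embedding argument (e.g.\ Cassels' lemma), can be embedded in some $\BC_p$. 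For the application in this paper the roots are algebraic (indeed roots of unity times elements of $\overline{\BQ}$), so the number-field version you sketched suffices.
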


\subsection{Proof of theorem \ref{thm.4}}
\lbl{sub.thm3}

Consider a formal WKB series 
$$
f_{\tau,n}(q)=q^{n^2 \ga + \eta n} \mu^n \sum_{j,k \geq 0}
c_{\tau,j,k}(n)  q^{\frac{j+nk}{r}} 
$$
where $\eta=\d(\l(q))$, $\mu=\lt(\l(q)) \in \overline{\BQ}$
and $c_{\tau,j,k}(n)$ are generalized power sums with $r$th roots of $1$. 
Fix a sequence $f_n(q)$ given by a finite $\BQ((q^{1/r}))$-linear combination
of formal WKB series $f_{\tau_i}(u,q)$. It follows that

\begin{equation}
\lbl{eq.eqsol}
f_n(q)=\sum_{i \in I} c_i(q) f_{\tau_i,n}(q)=
\sum_{i \in I} q^{n^2 \ga_i + \eta_i n} \mu_i^n \sum_{j,k \geq 0}
c_{i,j,k}(n)  q^{\frac{j+nk}{r}}
\end{equation}
where $I$ is a finite set, $c_i(q) \in \overline{\BQ}((q^{1/r}))$ are 
non-zero and $(\tau_i,\l_i)$ are pairwise distinct. 
Consider the subset $I'$ of $I$ where the minimum
$$
\min_{i \in I}\{ \ga_i\}
$$
is achieved. 

{\bf Case 1}. If $I'$ consists of a single element $i_0$ and $c_{i_0,j_0,k_0}(n)$
is not identically zero, then let us concentrate on the following part of
$f_n(q)$:
$$
f_n(q)=q^{n^2 \ga_{i_0} + \eta_{i_0} n} \mu_{i_0}^n c_{i_0,j_0,k_0}(n)
q^{\frac{j_0+n k_0}{r}} + \dots 
$$
The LMS theorem concludes that there is a finite set of full arithmetic 
progressions that cover the set of natural numbers with finite complement,
such that the restriction of the degree of  $f_n(q)$ to each full arithmetic
progression (with the exception of a finite set) is given 
by $n^2 \ga_{i_0} + \eta_{i_0} n+\frac{j_0+n k_0}{r}$ and 
the leading term is given by $\mu_{i_0}^n c_{i_0,j_0,k_0}(n)$. When $c_{i_0,j_0,k_0}(n)$
vanishes consider the next term in the series \eqref{eq.eqsol} and
repeat the above argument. After finitely many repetitions the process will
stop since $f_{\tau_i,n}(q)$ are $\overline{\BQ}((q^{1/r}))$-linearly
independent.

{\bf Case 2}. If $I'$ consists of more than one element, without loss
of generality, assume that $c_{i,0,0}(n)$ are not identically zero
for $i \in I'$. Consider the subset $I''$ of $I'$ where the 
maximum
$$
\max_{i \in I'}\{ \eta_i\}
$$
is achieved. 

{\bf Case 2.1}. $I''$ consists of a single element $i_0$. Then,
it follows that 
$$
f_n(q)=q^{n^2 \ga_{i_0} + \eta_{i_0} n} \mu_{i_0}^n c_{i_0,0,0}(n)
+ \dots 
$$
and the proof proceeds as in Case 1.

{\bf Case 2.2}.  $I''$ consists of more than a single element $i_0$.
In that case, it follows that
$$
f_n(q)=q^{n^2 \ga_{i_0} + \eta_{i_0} n} \sum_{i \in I''} \mu_{i}^n c_{i,0,0}(n)
+ \dots 
$$
The leading term of $f_n(q)$ is a generalized power sum 
and the proof proceeds as in Case 1.

\section{Invariants of $q$-holonomic sequences}
\lbl{sec.invariants}

\subsection{Synopsis of invariants}
\lbl{sub.synopsis}

In this section, which is independent of the results of our paper,
we summarize various invariants of a a $q$-holonomic sequence.
Some of these invariants were announced in \cite{Ga1,Ga2,Ga3}.
In this section $f$ denotes a $q$-holonomic sequence  
$f_n(q) \in \BQ(q)$ that satisfies Equation \eqref{eq.rec}
where $a_i(M,q) \in \BQ(M,q)$. Here is a summary of invariants of $f$.

\begin{itemize}
\item[(a)]
The annihilator polynomial $P_f$ 
\item[(b)]
The 2 and 3-dimensional Newton polytopes.
\item[(c)]
The characteristic curve $\ch_f$.
\item[(d)]
The tropical curve $\calT_f$.
\item[(e)]
The degree and leading term of $f$.
\end{itemize}
As was explained in \cite{Ga3}, these invariants fit well together
and read information of the $q$-holonomic sequence $f$. For completeness,
we will present these ideas here, too.

\subsection{The annihilating polynomial of a $q$-holonomic sequence}
\lbl{sub.annih}

In this section we discuss the annihilating polynomial of a $q$-holonomic
sequence $f$. Informally, it is the minimal order recursion relation 
(homogeneous or not) that $f$ satisfies. This was studied in \cite{Ga1} and
\cite{Ga3}.
 We will call an operator $P$ given by \eqref{eq.recP} with $r=1$ 
{\em reduced} if $a_i(M,q) \in \BZ[M,q]$ are coprime.
Let 
$$
\calD'=\BQ(M,q)\la L\ra/(LM-qML)
$$ 
denote the {\em localized} $q$-Weyl algebra.

First, we define the homogeneous annihilator $P_f$ of $f$. Consider
the $\calD'$-module (nonzero, since $f$ is $q$-holonomic)
\begin{equation}
\lbl{eq.Mf}
M_f=\{ P \in \calD'  \,\, | P f=0 \}
\end{equation}
and its unique monic generator $P' \in \calD'$. After multiplying by the common
denominator of the coefficients of $P'$ with respect to the powers of $L$,
we obtain a unique reduced generator $P_f$.

Next, we define the non-homogeneous annihilator $(P_f,b_f)$ of $f$.
Consider the $\calD'$-module 
\begin{equation}
\lbl{eq.Mhnf}
M^{nh}_f=\{ (P,b) \in \calD' \times \BQ(M,q) \,\, | P f=b \}
\end{equation}
$M^{nh}_f$ has a unique generator of the form $(P',\e)$ where $P' \in \calD'$ 
is monic (with respect to $L$) and $\e \in \{0,1\}$. 
After multiplying by the common
denominator of the coefficients of $P'$ with respect to the powers of $L$,
we obtain a unique generator of the form $(P^{nh}_f,b_f)$ where $P^{nh}_f$ 
is reduced and $b_f \in \BZ[M,q]$.

\begin{definition}
\lbl{def.nhom}
We call $P_f$ and $(P^{nh}_f,b_f)$ the {\em homogeneous} and the 
{\em non-homogeneous} annihilator of the $q$-holonomic sequence $f$, 
respectively.
\end{definition}

In \cite[Lem.1.4]{Ga3} we show how $P_f$ determines $(P^{nh}_f,b_f)$
and vice-versa.

\subsection{The characteristic variety of a $q$-holonomic sequence}
\lbl{sub.cv}

The {\em characteristic curve} of a $q$-holonomic sequence $f$ is
the zero set
\begin{equation}
\lbl{eq.cv}
\ch_f=\{(L,M) \in (\BC^*)^2 \,\, | P_f(L,M,1)=0 \}
\end{equation}
In case $f$ is the colored Jones polynomial of a knot $K$, the AJ Conjecture
of \cite{Ga2} identifies the characteristic curve with a geometric
knot invariant, namely the moduli space of $\SL(2,\BC)$ representations
of the knot complement.

\subsection{The Newton polytope of a $q$-holonomic sequence}
\lbl{sub.np}

In this section we consider the 3-dimensional and the 2-dimensional
Newton polytope of a $q$-holonomic sequence $f$. 
Let us write the annihilating polynomial $P_f$ of $f$
in terms of monomials:
\begin{equation}
\lbl{eq.Pijk}
P_f=\sum_{(i,j,k) \in \calA} a_{i,j,k} q^k M^j L^i.
\end{equation}
where $a_{i,j,k} \in \BQ$ and the sum is over a finite subset $\calA$ of $\BN^3$ 
which depends on $f$.

\begin{definition}
\rm{(a)}
The 3-{\em dimensional Newton polytope} $\calN_3(P_f)$ of $f$ is 
the convex hull $\calN_3(P_f)$ of $\calA$.
\newline
\rm{(b)}
The 2-{\em dimensional Newton polytope} $\calN_2(P_f)$ of $f$ is
in $\BR^3$ is the projection of $\calN_3(P_f)$
on $\BR^2$ under the map $(x,y,z) \mapsto (x,y)$.
\end{definition}
The {\em width} of $\calN_3(P_f)$ with respect to $L$ is
the degree of a minimal order recursion relation for $f$, and it is
a measure of the complexity of $f$. Likewise, the width of $\calN_3(P_f)$
with respect to $M$ is a measure of the complexity of the coefficients 
of a recursion relation of $f$. Knowing (or guessing) $\calN_3(P_f)$
given some terms in $f$ has applications in Quantum Topology.

Generically one expects that the Newton polygon of the polynomial 
$P_f(x,y,1)$ coincides with $\calN_2(P_f)$.

\begin{lemma}
\lbl{lem.N2N1}
The lower convex hull of $\calN_2(P_f)$ coincides with the Newton
polygon $N(P_f)$ of $P_f$ from Section \ref{sub.newton}.
\end{lemma}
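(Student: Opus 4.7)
The lemma amounts essentially to unpacking definitions and making one elementary observation, so I expect the proof to be quite short; the plan is to lay out the identifications carefully and then note that taking a ``lower convex hull'' is insensitive to filtering each vertical column of the set down to its minimum.

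First I would unpack the two objects. Writing $a_i(M,q) = \sum_{j,k} a_{i,j,k} q^k M^j$, the coefficient $\delta_M(a_i)$ is, by definition, the smallest $j$ for which $a_{i,j,k}\ne 0$ for some $k$; equivalently, letting $\pi:\BR^3\to\BR^2$, $(i,j,k)\mapsto (i,j)$, this is $\min\{j:(i,j)\in\pi(\calA)\}$. Since affine projections commute with taking convex hulls, $\calN_2(P_f)$ is the convex hull of $\pi(\calA)$ in $\BR^2$. Thus the lemma reduces to the identity
\[
\text{lower hull of } \mathrm{conv}(\pi(\calA)) \;=\; \text{lower hull of } \bigl\{(i,\delta_M(a_i)) : a_i\neq 0\bigr\},
\]
where I interpret the Newton polygon on the right as in Section~\ref{sub.newton}, i.e.\ as the bounded polyline obtained from the column minima (the two vertical rays are attached at the extreme columns in both descriptions and so do not affect the comparison).

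The main step is then the following elementary observation. For a finite set $S\subset\BR^2$, let $S_{\min}\subset S$ consist of one point per non-empty column, namely the lowest. Then $S_{\min}\subset S$ gives $\mathrm{conv}(S_{\min})\subset\mathrm{conv}(S)$, so the lower boundary of $\mathrm{conv}(S_{\min})$ lies weakly above that of $\mathrm{conv}(S)$; conversely, any point of $S\setminus S_{\min}$ lies strictly above a point of $S_{\min}$ in the same column, hence lies strictly above the lower boundary of $\mathrm{conv}(S_{\min})$ and therefore contributes nothing to the lower hull. Applying this with $S=\pi(\calA)$ and noting that $S_{\min}=\{(i,\delta_M(a_i))\}$, the two lower hulls agree.

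The only possible subtlety, and hence the only point that could count as an ``obstacle,'' is reconciling the two senses in which ``lower convex hull'' is used in the excerpt --- once for a discrete set of points and once for a polygon. But these agree because the lower boundary of $\mathrm{conv}(T)$ for any set $T$ depends only on $T$, not on whether we think of it as a set of points or as generating a polygon, so no further work is needed.
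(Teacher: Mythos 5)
Your proof is correct. The paper in fact states Lemma~\ref{lem.N2N1} without proof, treating it as an immediate consequence of the definitions, and your argument supplies exactly the obvious verification: $\calN_2(P_f)=\operatorname{conv}(\pi(\calA))$ because linear projections commute with convex hulls, the Newton polygon is built from the column minima $(i,\d_M(a_i))$ of $\pi(\calA)$, and the lower convex hull of a planar finite set depends only on those column minima since any other point is strictly dominated from below by a point in the same column and hence imposes no binding constraint on the lower envelope. Your side remark about the two vertical rays is also apt, as the paper's convention (``placing our eye at $-\infty$ in the vertical axis'') attaches the same rays to both objects, so the identification is complete.
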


\subsection{The tropical curve of a $q$-holonomic sequence}
\lbl{sub.tropical}

In this section, we assign a tropical curve $\calT_f$ to a $q$-holonomic
sequence $f$, following \cite{Ga3}. For a leisure introduction to
tropical geometry, see \cite{RGST,SS,Sm,Ga3} and references therein.
The main idea is to think of an operator in $M,L$ with coefficients in $\BQ(q)$
as a $q$-parameter family of polynomials in two {\em commuting} variables.
More precisely, the coefficients of the annihilating 
polynomial $P_f$ given by \eqref{eq.Pijk} define a function
\begin{equation}
\lbl{eq.Ftrop}
F: \BR^2 \longto \BR, \qquad
F(x,y)=\min_{(i,j,k) \in \calA}\{i x + j y + k\}
\end{equation}
$F$ is a piecewise linear convex function. The locus of the points in $\BR^2$
where $F$ is not differentiable is the {\em tropical curve} $\calT_f$
of $f$. By definition, $\calT_f$ is the locus of points $(x,y) \in \BR^2$
where the minimum in \eqref{eq.Ftrop} is achieved {\em at least twice}.
It is well-known that $\calT_f$ consists of a finite collection of line 
segments with rational vertices, and a finite collection of rays with
rational slopes, together with a set of multiplicities
that satisfy a {\em balancing condition}. Abstractly,
a tropical curve is a {\em balanced rational graph}, and vice-versa; see 
\cite[Thm.3.6]{RGST}. 

There is a duality between Newton polytopes and tropical curves. Indeed,
the projection of the lower hull of $\calN_3(P_f)$ gives a Newton subdivision
of $\calN_2(P_f)$. The tropical curve $\calT_f$ is {\em dual} to the Newton 
subdivision of $\calN_2(P_f)$. For drawings of tropical curves
of $q$-holonomic sequences of geometric origin, see \cite{Ga3}.

\subsection{A tropical equation for the degree of a $q$-holonomic sequence}
\lbl{sub.degholo}

In this section we explain the relation between the degree $\d(n)=\d(f_n(q))$
of a $q$-holonomic sequence $f$ and its tropical curve $\calT_f$,
following \cite{Ga3}. Since $f$ is annihilated by $P_f$ (given by 
\eqref{eq.Pijk}), it follows that for all natural numbers $n$ we have:
$$
\sum_{(i,j,k) \in \calA} a_{i,j,k} q^{k+jn} f_{n+i}(q)=0.
$$
Divide both sides by $f_n(q)$ and look at the degree with respect to $q$.
It follows that for all $n$, $\d(n)$ satisfies the following
{\em tropical equation}: the minimum
\begin{equation}
\lbl{eq.d2}
\min_{(i,j,k) \in \calA} \{\d(n+i)-\d(n) +jn+k \}
\end{equation}
is achieved at least twice. 
Although \eqref{eq.d2} may have non quasi-polynomial solutions, 
Theorem \ref{thm.1} suggests to look for quadratic quasi-polynomial
solutions of \eqref{eq.d2}.
It turns out that those solutions $\d(n)$ are determined by the tropical
curve $\calT_f$ as follows. For $n$ in an arithmetic progression minus
finitely many values, we have:

\begin{equation}
\lbl{eq.dc}
\d(n)=\frac{c_2}{2} n^2 + c_1 n + c_0
\end{equation}

\begin{proposition}
\lbl{prop.c1c2}
$(c_1,c_2)$ satisfy the system of equations
\begin{eqnarray}
\lbl{eq.c12a}
c_2 i + j &=& c_2 i' + j' \\
\lbl{eq.c12b}
\frac{1}{2} c_2 i^2 + c_1 i + k &=& \frac{1}{2} c_2 i'^2 + c_1 i' + k'
\end{eqnarray}
for two distinct points $(i,j,k)$ and $(i',j',k')$ of $\calA$. 
\end{proposition}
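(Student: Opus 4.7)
The plan is to substitute the quadratic quasi-polynomial expression \eqref{eq.dc} for $\delta(n)$ into the tropical equation \eqref{eq.d2} and extract conditions from the requirement that the minimum is attained at least twice for infinitely many $n$ in the given arithmetic progression.

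First I would fix the arithmetic progression on which $\delta(n) = \frac{c_2}{2} n^2 + c_1 n + c_0$ holds (with $c_0,c_1,c_2$ genuine constants on that progression), and compute the affine-in-$n$ expression
$$
\delta(n+i) - \delta(n) + jn + k \;=\; \bigl(c_2 i + j\bigr)\,n \;+\; \Bigl(\tfrac{c_2}{2} i^2 + c_1 i + k\Bigr),
$$
noting that the quadratic $\frac{c_2}{2} n^2$ and the constant $c_0$ cancel neatly in $\delta(n+i) - \delta(n)$. Thus the tropical equation says that among the finite family of affine functions $L_{i,j,k}(n) := (c_2 i + j) n + (\frac{c_2}{2} i^2 + c_1 i + k)$ indexed by $(i,j,k) \in \calA$, the pointwise minimum is attained at least twice for all sufficiently large $n$ in our arithmetic progression.

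Next I would argue that for a finite family of affine functions of $n$, the minimum is eventually realized by the (unique, if it exists) function of strictly smallest slope. For the minimum to be attained at least twice for infinitely many $n$, there must exist two distinct points $(i,j,k), (i',j',k') \in \calA$ whose affine functions coincide identically: equality of slopes gives $c_2 i + j = c_2 i' + j'$, which is \eqref{eq.c12a}, and equality of intercepts then gives $\tfrac{c_2}{2} i^2 + c_1 i + k = \tfrac{c_2}{2} i'^2 + c_1 i' + k'$, which is \eqref{eq.c12b}.

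The only delicate point — and the main obstacle — is ruling out the possibility that the minimum is attained twice for infinitely many $n$ by two affine functions that do \emph{not} coincide. This is elementary: two distinct affine functions of $n$ agree at most at a single value of $n$, so if the minimum in \eqref{eq.d2} were realized by a rotating pair of non-coinciding affine functions, only finitely many pairs exist, and each contributes only one value of $n$. Hence at least one pair must coincide identically, yielding the system \eqref{eq.c12a}–\eqref{eq.c12b}. A minor bookkeeping issue is the quasi-polynomial (periodic) nature of the coefficients in the original Theorem \ref{thm.1}; this is handled by passing to an arithmetic progression whose common difference is a multiple of the periods, which is implicitly done in the statement.
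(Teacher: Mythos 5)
Your proof is correct and takes essentially the same approach as the paper: substitute \eqref{eq.dc} into \eqref{eq.d2}, observe that the resulting expressions are affine in $n$, and conclude that the "achieved at least twice" condition forces two of these affine functions to coincide identically. The paper's own proof states this conclusion with a bare "it follows that"; you have supplied the elementary finite-family argument that the paper leaves implicit.
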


\begin{proof}
For $n$ in an arithmetic progression minus finitely many values we have:
the minimum
\begin{equation}
\lbl{eq.d3}
\min_{(i,j,k) \in \calA} \{ n(c_2 i + j) + \frac{1}{2} c_2 i^2 + c_1 i +k \}
\end{equation}
is achieved at least twice. 
It follows that there are two distinct points $(i,j,k)$ and $(i',j',k')$
of $\calA$ such that Equations \eqref{eq.c12a} and \eqref{eq.c12b} hold.
Equation \eqref{eq.c12a} involves only $(i,j)$ and $(i',j')$
and says that $c_2$ is the negative of a slope of $N(P_f)$. Likewise
Equation \eqref{eq.c12a} involves only $(i,k)$ and $(i',k')$.
\end{proof}

\begin{remark}
\lbl{rem.c12a}
It would be interesting to study the solutions to the tropical equation
\eqref{eq.d2} independent of differential Galois theory.
\end{remark}

\begin{remark}
\lbl{rem.c12b}
Each Equation \eqref{eq.c12a} and \eqref{eq.c12b} describes a tropical
curve in $\BR^2$. However, the system of Equations   
\eqref{eq.c12a} and \eqref{eq.c12b} is {\em not} the intersection of the two
tropical curves since the two planes $\BR^2$ are different projections
of $\BR^3$.
\end{remark}

\subsection{Future directions}
\lbl{sub.future}

$q$-holonomic sequences of several variables 
$f_{n_1,\dots,n_r}(q) \in \BZ[q^{\pm 1}]$ also appear in Quantum Topology.
For example the colored Jones function of a 2-component link, of the
$\mathfrak{sl}_3$-colored Jones polynomial of a knot is a $q$-holonomic
sequence of two variables. Suitably formulated, Theorem \ref{thm.1} should 
extend to $q$-holonomic sequences of many variables. This will be explained
in a future publication.

\subsection{Acknowledgment}
The idea of the present paper was conceived during the New York
Conference on {\em Interactions between Hyperbolic Geometry, 
Quantum Topology and Number Theory} in New York in the summer of 2009.
An early version of the present paper appeared in the New Zealand Conference
on {\em Topological Quantum Field Theory and Knot Homology Theory}
in January 2010 and a finished version appeared in the Conference in
Rutgers in honor of D. Zeilberger's 60th birthday.  
The author wishes to thank the organizers of the New York Conference, 
A. Champanerkar, O. Dasbach, E. Kalfagianni, I. Kofman, W. Neumann and 
N. Stoltzfus, the New Zealand Conference,  R. Fenn, D. Gauld
and V. Jones and the Rutgers Conference, for their hospitality.
In addition, the author wishes to thank E. Croot, T. Dimofte, D. Zagier, 
D. Zeilberger and J. Yu for stimulating conversations.

\bibliographystyle{hamsalpha}\bibliography{biblio}
\end{document}